\newtheorem{theorem}{Theorem}[section]
\newtheorem{lemma}[theorem]{Lemma}
\newtheorem{corollary}[theorem]{Corollary}
\numberwithin{equation}{section}
\newcommand{\br}{\mathbf {R}}
\newcommand{\h}{\mathfrak{h}_{p,\,q}}
\newcommand{\g}{\mathfrak{g}}
\newcommand{\so}{\mathfrak{so}(p,\,q)}
\newcommand{\I}{\mathfrak{I}_{n}}
\newcommand{\hm}{{\rm{Hom}}}
\newcommand{\pari}{\tfrac{\partial}{\partial x^i}}
\newcommand{\parj}{\tfrac{\partial}{\partial x^j}}
\newcommand{\parn}{\tfrac{\partial}{\partial x^n}}
\newcommand{\parone}{\tfrac{\partial}{\partial x^1}}
\newcommand{\partwo}{\tfrac{\partial}{\partial x^2}}
\newcommand{\dpari}{dx^i}
\newcommand{\dparj}{dx^j}
\newcommand{\dparn}{dx^n}
\newcommand{\dparone}{dx^1}
\begin{document}
\author[]{SHITU FAWAZ JIMOH}
\title{Leibniz Cohomology with Adjoint Coefficients}
\maketitle

\textbf{ABSTRACT}: With the Poincaré group $\mathbf{R}^{3,\,1}\rtimes O(3,\,1)$ as the model of departure, we focus, for $n=p+q$, $p+q\ge 4$, on the affine indefinite orthogonal group $\mathbf{R}^{p,q}\rtimes SO(p,\,q)$. Denote by $\mathfrak{h}_{p,\,q}$ the Lie algebra of the affine indefinite orthogonal group. We compute the Leibniz cohomology of $\mathfrak{h}_{p,\,q}$ with adjoint coefficients, written $HL^*(\mathfrak{h}_{p,\,q};\,\mathfrak{h}_{p,\,q})$. We calculate several indefinite orthogonal invariants, and $\mathfrak{h}_{p,\,q} $-invariants and provide the Leibniz cohomology 
\section{INTRODUCTION}


In physics, Lie groups appear as symmetry groups of physical systems, and their Lie algebras (tangent vectors near the identity) may be thought of as infinitesimal symmetry motions. Thus Lie algebras and their representations are used extensively in physics, notably in quantum mechanics and particle physics. The indefinite orthogonal group $O(p,\,q)$ is among the most important groups that are broadly used in physics. Of particular interest in physics is the Lorentz group $O(3,1)$- the group of all Lorentz transformations of Minkowski spacetime, the classical and quantum setting for all (non-gravitational) physical phenomena \cite{hall2015lie} and the Poincaré group- the affine group of the Lorentz group $O(3,1)$, namely $\mathbf{R}^{3,1}\rtimes O(3,1)$. The Lorentz group is the setting for electromagnetism and special relativity. 


Classification of differentiable structures can be done with connections. This method involves calculation which sometimes can be unpleasant to work with. Connections on manifolds occur naturally as a cochain in the complex for Leibniz cohomology of vector fields with coefficients in the adjoint representation \cite{lodder2020leibniz}. In \cite{lodder1998leibniz} a definition of  Leibniz cohomology, $HL^*$, for differentiable manifolds was proposed. Consequently, we can reduce the problem of classifying differentiable structures using connections to computing Leibniz cohomology. This is one of many motivations for studying Leibniz algebras and Leibniz (co)homology. Leibniz (co)homolgy, $HL^*(\mathfrak{h},\,\mathfrak{h})$, is difficult to compute for an arbitrary Leibniz algebra $\mathfrak{h}$.

Leibniz cohomology with coefficients in the adjoint representation for semi-simple Leibniz algebras has been studied in 
\cite{Feldvoss-Wagemann}, while $HL^*( \mathfrak{h}; \, \mathfrak{h})$ has been used to study deformation theory 
in the category of Leibniz algebras \cite{Fil-Mandal}, \cite{Fil-Mandal-Muk}.

We offer a calculation of $HL(\h;\,\h)$, where $\h$ is the affine indefinite orthogonal Lie algebra, $p+q\ge4$. We compute this cohomology by identifying some indefinite orthogonal invariants in terms of balanced tensors. We then provide the Leibniz cohomology in terms of these invariants. The main tools used in this computation are the Hochschild-Serre spectral sequence and the Pirashvili spectral sequence. In low dimension $HL^0(\h;\,\h)=0$,  $HL^1(\h;\,\h) =\langle I \rangle$ and $HL^2(\h;\,\h) = \langle \rho \rangle$. Higher dimensions of $HL^*(\h;\,\h)$ contain echoes of these classes against a tensor algebra.  We prove in section \ref{Leibniz} that there is an isomorphism of graded vector spaces
$$HL^{*}(\h ;\, \h) \simeq \langle I,\rho \rangle \otimes T(\gamma_{pq}^{*}),$$
where $\langle I,\rho \rangle$ is the real vector space with basis $\{I,\,\rho\}$ and $T(\gamma_{pq}^{*})$ is the tensor algebra on the class of  $\gamma_{pq}^{*}$. We prove in section \ref{invariants} that $\gamma_{pq}^{*}$ and $\rho$ are $\so$-invariant. Also, $I$ is $\h$-invariant. 
 \begin{equation}
	\begin{split}
		I(\alpha_{ij}) &= 0,\ \ 1\le i<j \le p, \ \ p+1\le i<j \le n, \\
		I(\beta_{ij})& =0,\ \ 1\le i\le p,\  \ p+1\le j \le n,\\
		I(\pari)&=\pari, \ \ i=1,2,\cdots,n.
	\end{split}
\end{equation}
\begin{equation}
	\begin{split}
		\rho(\alpha_{ij} \, \wedge g )&=0, \ \ 1\le i<j \le p, \ \ p+1\le i<j \le n,\ \ {\rm{for\, all }}\, g \in \h ,\\
		\rho(\beta_{ij} \, \wedge g )&=0, \ \ 1\le i\le p ,\ \ p+1\le j \le n, \ \ {\rm{for\, all }}\, g \in \h , \\
		\rho(\pari \wedge \parj ) & = \alpha_{ij} , \ \ 1\le i\le j\le p,\\
		\rho(\pari \wedge \parj )& = - \alpha_{ij},\ \ p+1\le i\le j\le n,\\
		\rho(\pari \wedge \parj )&= \beta_{ij}, \ \ 1\le i\le p, \ \  p+1\le j\le n.		
	\end{split}
\end{equation}
\begin{equation}
	\begin{split}
		\gamma^{*}_{pq}=&\sum\limits_{ 1\le i<j \le p} (-1)^{i+j} \dparone \wedge  \cdots \wedge \widehat{\dpari} \wedge \cdots \wedge \widehat{\dparj} \wedge \cdots  \wedge \dparn \otimes \alpha^{*}_{ij}\\
		-&\sum\limits_{ p+1\le i<j \le n} (-1)^{i+j+1} \dparone \wedge  \cdots \wedge\widehat{\dpari}
		\wedge \cdots \wedge\widehat{\dparj} \wedge \cdots \wedge \dparn \otimes \alpha^{*}_{ij}\\
		-&\underset{p+1\le j \le n}{\sum\limits_{1\le i \le p}} (-1)^{i+j+1} \dparone \wedge  \cdots \wedge \widehat{\dpari} \wedge \cdots \wedge \widehat{\dparj} \wedge \cdots \wedge \dparn \otimes \beta^{*}_{ij}
	\end{split}
\end{equation}

This result generalizes Lodder's results \cite{lodder2020leibniz} obtained on the rotation groups. The rotation groups are a special case of  $\so$, the case when $q=0$, usually denoted by $\mathfrak{so}(p)$.

\section{THE INDEFINITE ORTHOGONAL LIE ALGEBRA}
Let
\begin{equation}
	I_{p,\,q} = 
	\begin{pmatrix}
		I_{p} & 0 \\
		0 &-I_{q} \\
	\end{pmatrix}
\end{equation} 

with $I_{k}$ denoting the $k \times k$ identity matrix. Then we define the indefinite orthogonal Lie algebra, $$\so =\{X \in M_{n}(\mathbf{R})\,| \,X^{T}I_{p,\,q}=-I_{p,\,q}X\}$$ with $ \,  n=p+q \,$ and $\, p, \,q \in$ \, \textbf{N}.

Consider the standard coordinates on $\mathbf{R^n}$ given by $(x_1,x_2,...,x_n)$  with unit vector fields $\frac{\partial}{\partial{x_i}}$ parallel to the $x_{i}$ axes.

Let
\begin{equation}
	\begin{split}
		\alpha_{ij} &:= x_{i} \parj - x_{j} \pari, \ \ \  1\le i<j \le p,  \ \ \ \, p+1\le i<j \le n,\\
		\beta_{ij} &:= x_{i} \parj + x_{j} \pari, \ \ \  1\le i \le p, \ \ \ p+1\le j \le n.
	\end{split}
\end{equation}
Then  $\{\alpha_{ij}\}, \, \{\beta_{ij}\},$ is a vector space basis for a lie algebra isomorphic to $\so$.  Let $\I$ be the $\mathbf{R}$ vector space spanned by $$\{\pari\}, \ \ \ i=1,2,\cdots,n.$$ Then $\I$ is an Abelian Lie algebra. Let $\h$ be the Lie algebra with basis given by the union of  $\{\alpha_{ij}\},\{\beta_{ij}\},\{\pari\}$ . Then $\I$ is an ideal of $\h$ and there is a short exact sequence of Lie algebras \cite[Page 217]{weibelintroduction}
$$0 \longrightarrow \I \longrightarrow \h  \longrightarrow \so \longrightarrow 0 $$ with $\h/\I$ $\simeq \so$ and $\h$ is an affine extension of $\so$.

Let $\alpha_{ij}^{*},\ \beta_{ij}^{*}$ be the dual of $\alpha_{ij}, \ \beta_{ij}$ respectively with respect to the basis \{$\alpha_{ij}$\} $\cup$
$ \{\beta_{ij}\}$ $\cup$ \{$\frac{\partial }{\partial x_{i}}$\} of $h_{p,\,q}$, and let $dx^{i}$ be the dual of $ \pari$.
\section{LIE ALGEBRA COHOMOLOGY WITH COEFFICIENTS}
Let $\g$ be a Lie algebra over a ring $k$ and $V$ be any $\g$-module. The Lie algebra cohomology of $\g$ with coefficients in the module $V$, written $H^*_{\rm{Lie}}(\g;\,V)$, is the cohomology of the cochain complex (Chevalley-Eilenberg complex), $$ \hm_{k}(\mathbf{R},\,V) \xrightarrow{\delta} \hm_{k}(\g,\,V) \xrightarrow{\delta} \hm_{k}(\g^{\wedge 2},\,V) \xrightarrow{\delta} \hm_{k}(\g^{\wedge 3},\,V) \xrightarrow{\delta}\cdots,$$ 
where $\g^{\wedge n}$ is the nth exterior power of $\g$ over $k$ and where the coboundary $\delta f$ of such an $n$-cochain is the $(n+1)$-cochain
\begin{equation}
	\begin{split}
		&\delta f(g_1\wedge ...  \wedge g_{n+1})=\sum_{i=1}^{n+1}(-1)^{i} g_{i}\cdot f(g_{1}\wedge...\hat{g_{i}}...\wedge g_{n+1}) \\ &+\sum_{ 1\le i<j\le n+1}^{}(-1)^{j}f(g_{1}\wedge...\wedge g_{i-1}\wedge[g_i,g_j]\wedge...\hat{g_{j}}... \wedge g_{n+1}).
	\end{split}
\end{equation}
In particular when $V=\g$, we obtain the Lie algebra cohomology with coefficients in the adjoint representation, written $H^*_{\rm{Lie}}(\g;\,\g)$ and when $V= g ':= \hm_{k}(\g,\,k)$, we obtain the Lie algebra cohomology with coefficients in the co-adjoint representation, written $H^*_{\rm{Lie}}(\g;\,\g ')$. We consider $k$ as a trivial $\g$-module, so $H^*_{\rm{Lie}}(\g;\, k)$ is the Lie algebra cohomology with trivial coefficients.\\ 
The (left) invariant submodule $V^{\g}$ of a $\g$-module $V$ is defined as:$$ V^{\g}=\{v\in V: \ xv=0 \ {\rm{ for \ all }} \ x \in \g \}$$
The action of $\g$ on $k$ is trivial as mentioned above. This means $g\cdot c= 0$ for all $c \in k$, for all $g \in \g$.
Let $\g$ act on itself via the adjoint representation. For $g$ $\in$ $\g$, the linear map 
ad$_{g}$: $\g$ $\xrightarrow{}$ $\g$ is given by $$g \cdot x :={\rm{ad}}_g(x)=[g,x],\ {\rm{for \ all \ }} x \in \g.$$

For $g \in \g$ and $f \in \hm_{k}(\g^{\wedge n}, \, V)$, we define the action of $\g$ on $\hm_{k}(\g^{\wedge n}, \, V)$ by

\begin{equation}
	\begin{split}
		&(gf)(x_1\wedge x_2 \wedge  ... \wedge x_n) = g\cdot f(x_1\wedge x_2 \wedge  ... \wedge x_n)\\
		&+ \sum^{n}_{i=1} f(x_1\wedge ...\wedge x_{i-1} \wedge[x_{i}, g] \wedge x_{i+1}\wedge ... \wedge g_n)
	\end{split}
\end{equation}

The action of $\g$ on itself extends to $\g^{\wedge n}$ by
\begin{equation}
	[g,\,x_1\wedge x_{2} \wedge ... \wedge x_n]= \sum_{i=1}^{n} x_1 \wedge _2 \wedge ... \wedge [g, \, x_i] \wedge ... \wedge x_{n}, \ {\rm{for}} \ g,\ x_{i} \in \g \ {\rm{for \ all \ i }}.
\end{equation}

\section{LEIBNIZ COHOMOLOGY WITH COEFFICIENTS}
Let $\g$ be a Leibniz algebra over a ring $k$ and $V$ be a representation $\g$. Let $$CL^n(\g,\, V) := \hm_{k}(\g^{\otimes n},\,V), \ \ {\rm{where}} \ \ n\ge 0.$$ The Leibniz cohomology of $\g$ with coefficients in the representation $V$, written $HL^*(\g;\,V)$, is the cohomology of the cochain complex $CL^*(\g,\, V)$ , $$ \hm_{k}(\mathbf{R},\,V) \xrightarrow{\delta} \hm_{k}(\g,\,V) \xrightarrow{\delta} \hm_{k}(\g^{\otimes 2},\,V) \xrightarrow{\delta} \hm_{k}(\g^{\otimes 3},\,V) \xrightarrow{\delta}\cdots,$$ 
where $\g^{\otimes n}$ is the nth tensor power of $\g$ over $k$ and where the coboundary $\delta f$ of such an $n$-cochain is the $(n+1)$-cochain
\begin{equation}
	\begin{split}
		&\delta f(g_1\otimes \cdots  \otimes g_{n+1}):= [g_1,\,  f(g_{2}\otimes \cdots \otimes g_{n+1}) ]\\ &+\sum_{i=2}^{n+1}(-1)^{i} [f(g_{1}\otimes \cdots \hat{g_{i}} \cdots \otimes g_{n+1}),\,g_{i}] \\ 
	   &+\sum_{ 1\le i<j\le n+1}^{}(-1)^{j+1}f(g_{1}\otimes \cdots \otimes g_{i-1}\otimes[g_i,g_j]\otimes \cdots \hat{g_{j}} \cdots  \otimes g_{n+1}).
	\end{split}
\end{equation}
It is easy to compute $HL^*(\g,\,V)$ in low dimension. For higher dimensions of $HL^{*}(\g,\,V)$, we use the Pirashvili spectral sequence \cite{pirashvili1994leibniz} and a long exact sequence  induced by the canonical projection map $\pi_{\rm{rel}} :\g^{\otimes(n+2)}  \xrightarrow{}\g^{\wedge(n+2)}, $ 	$\pi_{{\rm{rel}}}(g_{1}\otimes g_{2}\otimes...\otimes g_{n+2})=g_{1}\wedge g_{2}\wedge...\wedge g_{n+2}.$\\
\section{INVARIANTS FOR INDEFINITE ORTHOGONAL LIE ALGEBRA} \label{invariants}
We present the invariants of $\wedge ^{*}\I$, $\hm(\wedge^* \I, \h)$  and $\wedge^*\I \otimes \h$ under the action of $\so$ where $n=p+q.$
\begin{lemma}\label{invariantone}
There is a vector space isomorphism
\begin{equation}
[\wedge ^{*}\I]^{\so} \simeq  \mathbb{R} \oplus \langle v \rangle, where \ \ v = \parone  \wedge \partwo \wedge \cdots \wedge  \parn.
\end{equation}
\end{lemma}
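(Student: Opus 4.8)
The plan is to recognize $\wedge^{*}\I$ as the exterior algebra on the standard representation of $\so$ and then to detect trivial constituents degree by degree. First I would record the action of $\so$ on $\I$ by a direct Lie bracket of vector fields, obtaining
\[
	[\alpha_{ij},\,\tfrac{\partial}{\partial x^{l}}] = -\delta_{il}\,\parj + \delta_{jl}\,\pari, \qquad [\beta_{ij},\,\tfrac{\partial}{\partial x^{l}}] = -\delta_{il}\,\parj - \delta_{jl}\,\pari,
\]
which exhibits $\I \cong \br^{n}$ as the defining representation of $\so$. Since the action via (3.9) preserves the tensor/exterior grading, $\wedge^{*}\I = \bigoplus_{k=0}^{n}\wedge^{k}\I$ splits as a graded $\so$-module, and hence $[\wedge^{*}\I]^{\so} = \bigoplus_{k=0}^{n}[\wedge^{k}\I]^{\so}$. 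So it suffices to compute each graded piece separately.

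The two extreme degrees are immediate. In degree $0$ the module $\wedge^{0}\I = \br$ is trivial, giving the summand $\br$. In degree $n$ the space $\wedge^{n}\I = \langle v\rangle$ is one-dimensional, and I would check invariance directly: for any generator $g\in\{\alpha_{ij},\beta_{ij}\}$ the only potentially nonzero terms of $g\cdot v$ come from the slots $i$ and $j$, and each produces a repeated factor $\pari$ or $\parj$, so $g\cdot v = 0$; equivalently $\so\subset\mathfrak{sl}_{n}$ acts by trace, which is zero, on the top exterior power. Thus $[\wedge^{n}\I]^{\so} = \langle v\rangle$.

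The heart of the argument is the vanishing $[\wedge^{k}\I]^{\so}=0$ for $0<k<n$. My preferred route is reduction by complexification: since taking invariants commutes with extension of scalars and $\so\otimes\mathbb{C}\cong\mathfrak{so}(n,\mathbb{C})$ with $\I\otimes\mathbb{C}\cong\mathbb{C}^{n}$, one gets $\dim_{\br}[\wedge^{k}\I]^{\so} = \dim_{\mathbb{C}}[\wedge^{k}\mathbb{C}^{n}]^{\mathfrak{so}(n,\mathbb{C})}$, and as every real form shares this complexification this also equals the corresponding dimension for the compact form $\mathfrak{so}(n)$. Because $n=p+q\ge 4$, the exterior powers of the defining representation of $\mathfrak{so}(n,\mathbb{C})$ in degrees $0<k<n$ decompose into nontrivial fundamental irreducibles (with the middle degree, $n$ even, splitting into its self-dual and anti-self-dual parts), none of which is the trivial module; hence there are no invariants. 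As a self-contained alternative I would restrict to $\mathfrak{so}(p)\oplus\mathfrak{so}(q)=\langle\alpha_{ij}\rangle$, whose exterior-algebra invariants are, by the definite-signature case of this same statement, spanned by $1$ and the two block volume forms; intersecting with degree $0<k<n$ leaves only $v_{p}=\parone\wedge\cdots\wedge\tfrac{\partial}{\partial x^{p}}$ and $v_{q}=\tfrac{\partial}{\partial x^{p+1}}\wedge\cdots\wedge\parn$, and applying a boost gives $\beta_{1,p+1}\cdot v_{p} = -\,\tfrac{\partial}{\partial x^{p+1}}\wedge\partwo\wedge\cdots\wedge\tfrac{\partial}{\partial x^{p}}\neq 0$ and likewise $\beta_{1,p+1}\cdot v_{q}\neq 0$, so neither is $\so$-invariant. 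Either way $[\wedge^{k}\I]^{\so}=0$ for $0<k<n$, and assembling the graded pieces yields $[\wedge^{*}\I]^{\so}\cong\br\oplus\langle v\rangle$.

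The main obstacle is exactly this intermediate range: certifying that no trivial summand occurs. In the representation-theoretic route the delicate point is the even middle degree, where $\wedge^{n/2}$ is reducible and one must confirm that both halves are nontrivial; in the explicit route the care is needed when $p$ or $q$ equals $1$, since then $\mathfrak{so}(1)=0$ and that block contributes invariants in both of its degrees, so one must verify that the boost step still eliminates every surviving candidate. Everything else (the bracket identities, the degree $0$ and degree $n$ computations, and the grading decomposition) is routine.
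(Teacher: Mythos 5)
Your argument is correct, but it takes a genuinely different route from the paper, which offers essentially no proof of this lemma: it cites \cite{biyogmam2011leibniz} and asserts that the graded pieces $[\I^{\wedge 0}]^{\so}=\mathbf{R}$, $[\I^{\wedge n}]^{\so}=\langle v\rangle$, and $[\I^{\wedge k}]^{\so}=0$ for $k\notin\{0,n\}$ ``can easily be found by direct calculations.'' You prove exactly that graded statement. Your degree $0$ and degree $n$ cases match what any direct computation would do (the observation that $\so\subset\mathfrak{sl}_n$ acts by the trace on the top exterior power is a clean shortcut), but for the essential vanishing in degrees $0<k<n$ you supply real arguments where the paper has none: (i) complexification, using that invariants commute with base change and that $\wedge^{k}\mathbb{C}^{n}$ decomposes over $\mathfrak{so}(n,\mathbb{C})$ into nontrivial irreducibles (two nontrivial halves in the middle degree when $n$ is even); and (ii) an elementary restriction to $\mathfrak{so}(p)\oplus\mathfrak{so}(q)=\langle\alpha_{ij}\rangle$, which reduces the intermediate-degree candidates to the block volume forms $v_p,v_q$, followed by a boost $\beta_{1,p+1}$ to eliminate them. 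Route (i) buys uniformity in $n$ and a conceptual reason for the vanishing, at the price of importing standard representation theory; route (ii) stays at the level of bracket computations and is closest in spirit to the ``direct calculation'' the paper gestures at, so it is the more natural companion to the paper's treatment of Lemma \ref{invarianttwo}. One small repair is needed in route (ii): when $p=q$ the candidates $v_p$ and $v_q$ lie in the \emph{same} degree, so you must exclude invariance of a general combination $a\,v_p+b\,v_q$, not merely of each spanning vector; this does follow from your computation, since $\beta_{1,p+1}\cdot(a\,v_p+b\,v_q)=-a\,\tfrac{\partial}{\partial x^{p+1}}\wedge\partwo\wedge\cdots\wedge\tfrac{\partial}{\partial x^{p}}-b\,\parone\wedge\tfrac{\partial}{\partial x^{p+2}}\wedge\cdots\wedge\parn$ is a combination of two distinct basis monomials and so vanishes only when $a=b=0$, but the inference as you wrote it (``neither is $\so$-invariant'') should be stated in this stronger form.
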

\begin{proof}
Lemma \ref{invariantone} is proved in \cite{biyogmam2011leibniz} and can easily be found by direct calculations.
\begin{equation*}
	\begin{split}
	&[\I^{\wedge 0}]^{so(p,\,q)} = \mathbf{R},\quad [\I^{\wedge n}]^{so(p,q)} = \langle v\rangle \ {\rm{and}} \\
	&[\I^{\wedge k}]^{so(p,q)} = \{0\} \ \ {\rm{whenever}} \ \ k \notin \{0,n\}.
	\end{split}
\end{equation*}
\end{proof}

The following lemma is proved in \cite{biyogmam2013leibniz} and can also be found by direct calculations.

\begin{lemma} \label{invarianttwo} 
There is a vector space isomorphism
 
$[\I \otimes \h]^{\so} \simeq \langle I_{pq} \rangle$ where
\begin{equation*}
 \ \ I_{pq}=\sum\limits_{i=1}^p  \pari \otimes \pari -\sum\limits_{i=p+1}^n  \pari \otimes \pari,
\end{equation*}
$[\I^{\wedge2} \otimes \h]^{\so} \simeq \langle \rho_{pq} \rangle \ \ where$
\begin{equation*}
	\begin{split}
		&\rho_{pq} =\sum\limits_{ 1\le i<j \le p} \pari \wedge \parj \otimes \alpha_{ij} - \sum\limits_{ p+1\le i<j \le n}  \pari \wedge \parj \otimes \alpha_{ij}\\
		&-\underset{p+1\le j \le n}{\sum\limits_{ 1\le i\le p}} \pari \wedge \parj \otimes \beta_{ij},	
	\end{split}
\end{equation*}
$[\I^{\wedge(n-1)} \otimes \h]^{\so} \simeq \langle \beta_{pq} \rangle \ \  where$
\begin{equation*}
	\begin{split}
		\beta_{pq}=&\sum\limits_{i=1}^p (-1)^{i+1} \parone  \wedge \cdots  \wedge \hat{\pari}  \wedge \cdots \wedge \parn \otimes \pari \\
		- &\sum\limits_{i=p+1}^n (-1)^{i} \parone  \wedge \cdots \wedge \hat{\pari}  \wedge \cdots \wedge \parn \otimes \pari,
	\end{split}
\end{equation*}
$[\I^{\wedge(n-2)} \otimes \h]^{\so} \simeq \langle \gamma_{pq} \rangle\ \  where$
\begin{equation*}
	\begin{split}		
		\gamma_{pq}=&\sum\limits_{ 1\le i<j \le p} (-1)^{i+j} \parone \wedge  ... \wedge \hat{\pari} \wedge ... \wedge \hat{\parj} \wedge...  \wedge \parn \otimes \alpha_{ij}\\
		-&\sum\limits_{ p+1\le i<j \le n} (-1)^{i+j+1} \parone \wedge  ... \wedge\hat{\pari} \wedge ... \wedge\hat{\parj} \wedge...  \wedge \parn \otimes \alpha_{ij}\\
		-&\underset{p+1\le j \le n}{\sum\limits_{1\le i \le p}} (-1)^{i+j+1} \parone \wedge  ... \wedge \hat{\pari} \wedge ... \wedge \hat{\parj} \wedge...  \wedge \parn \otimes \beta_{ij}.
	\end{split}
\end{equation*}

$[\I^{\wedge k} \otimes \h]^{\so} \simeq \{0\}$ for $k\notin  \{1,2,n-1, n-2\}$, 
\end{lemma}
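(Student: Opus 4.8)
The plan is to exploit the $\so$-module splitting of $\h$ coming from its semidirect-product structure. As an $\so$-module the short exact sequence $0 \to \I \to \h \to \so \to 0$ splits, since $\so$ sits inside $\h$ as the span of the $\alpha_{ij},\beta_{ij}$ and both $\I$ and $\so$ are stable under the adjoint action of $\so$; hence $\h \simeq \I \oplus \so$ as $\so$-modules and
$$[\I^{\wedge k} \otimes \h]^{\so} \simeq [\I^{\wedge k}\otimes \I]^{\so} \oplus [\I^{\wedge k}\otimes \so]^{\so}.$$
It therefore suffices to treat the two summands separately. Writing $V = \I$ for the standard representation $\mathbf{R}^{p,q}$ of $\so$, the metric $I_{p,q}$ furnishes an $\so$-invariant isomorphism $V \simeq V^{*}$, and consequently $\so \simeq \wedge^{2} V$ as $\so$-modules, this being the usual identification of an orthogonal Lie algebra with the second exterior power of its defining representation. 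This reduces everything to computing $[\wedge^{k} V \otimes \wedge^{m} V]^{\so}$ for $m=1$ and $m=2$.

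First I would convert these invariant spaces into spaces of homomorphisms. Using the self-duality $\wedge^{k} V \simeq (\wedge^{k} V)^{*}$ afforded by the metric, one has $[\wedge^{k} V \otimes \wedge^{m} V]^{\so} \simeq \hm_{\so}(\wedge^{k} V,\, \wedge^{m} V)$. Complexifying, which does not change the dimension of the invariant space, and invoking the representation theory of $\mathfrak{so}(n,\mathbb{C})$, the exterior powers $\wedge^{k} V$ for $0\le k \le n$ are irreducible away from the middle degree and satisfy Hodge duality $\wedge^{k} V \simeq \wedge^{n-k} V$, the isomorphism being contraction against the $\so$-invariant volume form $v = \parone \wedge \cdots \wedge \parn$ of Lemma \ref{invariantone}. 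By Schur's lemma $\hm_{\so}(\wedge^{k} V, \wedge^{m} V)$ is then one-dimensional when $m \in \{k,\, n-k\}$ and zero otherwise.

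Applying this twice gives the result. Taking $m=1$, the space $\hm_{\so}(\wedge^{k} V, V)$ is nonzero exactly for $k \in \{1,\,n-1\}$, the generator being the identity map when $k=1$ and the Hodge star when $k=n-1$; under the identification with invariant vectors these produce $I_{pq}$ and $\beta_{pq}$ respectively. Taking $m=2$, the space $\hm_{\so}(\wedge^{k} V, \wedge^{2} V)$ is nonzero exactly for $k \in \{2,\,n-2\}$, giving $\rho_{pq}$ (the identity) and $\gamma_{pq}$ (the Hodge star). For every other $k$ both summands vanish, so $[\I^{\wedge k}\otimes \h]^{\so} = 0$ for $k \notin \{1,2,n-1,n-2\}$. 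To pin down the explicit cocycles stated, rather than merely their span, I would exhibit the four elements $I_{pq},\rho_{pq},\beta_{pq},\gamma_{pq}$ in coordinates and verify $\so$-invariance by applying each generator $\alpha_{ij}$ and $\beta_{ij}$ and checking that the result vanishes, which is the direct calculation referred to in the statement.

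The main obstacle I anticipate is the middle exterior power. When $n$ is even, $\wedge^{n/2} V$ decomposes into self-dual and anti-self-dual pieces, so the clean irreducibility and Schur arguments must be checked separately there; in particular for $n=4$ the degrees $k=2$ and $k=n-2$ coincide and $\wedge^{2} V$ is already reducible, since $\so$ is then not simple, so that $\hm_{\so}(\wedge^{2} V, \wedge^{2} V)$ becomes two-dimensional and one must confirm that $\rho_{pq}$ and $\gamma_{pq}$ account for exactly this space. Away from the middle degree the argument is immediate from semisimplicity of $\so$, which holds for $p+q \ge 4$, together with Schur's lemma.
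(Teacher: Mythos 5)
Your proof is correct, but it takes a genuinely different route from the paper, which offers no argument for this lemma at all: it simply cites \cite{biyogmam2013leibniz}, where the invariants are obtained by direct calculation (writing a general element of $\I^{\wedge k}\otimes\h$ in the basis $\{\alpha_{ij}\}\cup\{\beta_{ij}\}\cup\{\pari\}$, applying the generators of $\so$, and solving the resulting linear system). Your argument --- splitting $\h\simeq\I\oplus\so$ as $\so$-modules, identifying $\so\simeq\wedge^{2}V$ with $V=\I$ the standard representation, using the invariant metric to rewrite $[\wedge^{k}V\otimes\wedge^{m}V]^{\so}$ as $\hm_{\so}(\wedge^{k}V,\,\wedge^{m}V)$, complexifying, and then invoking irreducibility of the exterior powers of the standard representation of $\mathfrak{so}(n,\mathbb{C})$, Hodge duality, and Schur's lemma --- is more conceptual: it explains why exactly the degrees $k\in\{1,2,n-1,n-2\}$ survive (they are the Hodge partners of $V$ and of $\wedge^{2}V\simeq\so$), and it generalizes to other coefficient modules with no additional effort, whereas the cited direct computation is elementary and self-contained and automatically produces the explicit generators, which your approach must still exhibit and verify at the end (as you acknowledge). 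Note also that your middle-degree caveat is not a mere technicality: for $n=4$ the cases $k=2$ and $k=n-2$ coincide, $\wedge^{2}V$ splits into self-dual and anti-self-dual summands over $\mathfrak{so}(4,\mathbb{C})$, and the invariant space in degree $2$ is two-dimensional, spanned by $\rho_{pq}$ and $\gamma_{pq}$ together; so the lemma as literally written (two separate one-dimensional claims) is accurate only for $n\ge 5$, and it is precisely your representation-theoretic bookkeeping that makes this visible, since the paper's range $p+q\ge 4$ includes this boundary case.
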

\begin{lemma} \label{invariant3}
	There is a vector space isomorphism
\begin{equation}
	\hm(\I^{\wedge k},\, \h) \simeq \I^{\wedge k} \otimes \h\ ,\ for  \ \ k=0,1,2, \cdots.
\end{equation}
\end{lemma}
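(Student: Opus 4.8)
The plan is to reduce the claimed isomorphism to two standard facts: the tensor–hom identity for finite-dimensional modules, and the self-duality of the standard representation $\I$ of $\so$. Both $\I^{\wedge k}$ and $\h$ are finite-dimensional over $\mathbf{R}$, so there is no size or convergence obstruction; the only work is to produce the isomorphism naturally enough that it respects the $\so$-action, since that equivariance is what allows the invariants computed in Lemma \ref{invarianttwo} to be read off for $\hm(\I^{\wedge k},\,\h)$.

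First I would invoke the canonical isomorphism $\hm(V,\,W)\simeq V^{*}\otimes W$, valid for any finite-dimensional $V$, sending $f\otimes w$ (with $f\in V^{*}$) to the map $v\mapsto f(v)\,w$. Taking $V=\I^{\wedge k}$ and $W=\h$ gives
\[
\hm(\I^{\wedge k},\,\h)\;\simeq\;(\I^{\wedge k})^{*}\otimes \h .
\]
A direct comparison of the two $\so$-actions shows this map is $\so$-equivariant: on the right $\so$ acts by $X\cdot(f\otimes w)=(X\cdot f)\otimes w+f\otimes(X\cdot w)$ with $(X\cdot f)(v)=-f(X\cdot v)$, and on the left by $(X\cdot\Phi)(v)=X\cdot\Phi(v)-\Phi(X\cdot v)$, where $\so$ acts on $\h$ through the bracket of the subalgebra $\so\subset\h$; both sides send $f\otimes w$ to $v\mapsto f(v)(X\cdot w)-f(X\cdot v)\,w$. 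It therefore remains only to identify $(\I^{\wedge k})^{*}$ with $\I^{\wedge k}$ as an $\so$-module.

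For this I would use the nondegenerate symmetric bilinear form $B(u,\,v)=u^{T}I_{p,\,q}\,v$ on $\I\simeq\br^{n}$. The defining relation $X^{T}I_{p,\,q}=-I_{p,\,q}X$ for $X\in\so$ gives at once $B(Xu,\,v)+B(u,\,Xv)=u^{T}(X^{T}I_{p,\,q}+I_{p,\,q}X)v=0$, so $B$ is $\so$-invariant; being nondegenerate, the map $\I\to\I^{*}$, $u\mapsto B(u,\,-)$, is an $\so$-module isomorphism. Passing to $k$-th exterior powers and composing with the natural determinant pairing $(\I^{\wedge k})^{*}\simeq(\I^{*})^{\wedge k}$ yields $(\I^{\wedge k})^{*}\simeq\I^{\wedge k}$ as $\so$-modules. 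Substituting into the display above produces the asserted isomorphism $\hm(\I^{\wedge k},\,\h)\simeq\I^{\wedge k}\otimes\h$, and I would record that it is $\so$-equivariant.

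The routine ingredients — naturality of tensor–hom and the determinant pairing on exterior powers — present no difficulty. The point deserving care, and the only place the specific structure of $\so$ enters, is the self-duality of $\I$: one must verify that the invariant form is exactly the one encoded by $I_{p,\,q}$ and that its invariance follows from the defining relation, rather than assuming self-duality abstractly. Once $\I\simeq\I^{*}$ is established $\so$-equivariantly, the extension to exterior powers and the final assembly are formal.
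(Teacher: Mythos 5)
Your proof is correct, but it takes a genuinely different route from the paper. The paper constructs the isomorphism by an explicit formula: $\psi(\phi) = \sum (-1)^{s}\, z \otimes \phi(z)$, where $z$ runs over the standard basis of $\I^{\wedge k}$ and $s$ counts how many of the factors $\tfrac{\partial}{\partial x^{i_j}}$ in $z$ have index $i_j > p$; it then verifies $\so$-equivariance by directly comparing $\psi(g\cdot\phi)$ with $g\cdot\psi(\phi)$ term by term (with some details deferred). You instead factor the isomorphism through two structural facts: the tensor--hom identity $\hm(\I^{\wedge k},\,\h) \simeq (\I^{\wedge k})^{*} \otimes \h$ and the self-duality $(\I^{\wedge k})^{*} \simeq \I^{\wedge k}$ coming from the nondegenerate $\so$-invariant form $B(u,\,v) = u^{T} I_{p,\,q}\, v$, whose invariance you correctly extract from the defining relation $X^{T}I_{p,\,q} = -I_{p,\,q}X$. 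In fact the two constructions give the \emph{same} map: the paper's signs $(-1)^{s}$ are exactly the Gram-matrix entries of the form induced by $B$ on $\I^{\wedge k}$ (each factor with index above $p$ contributes a $-1$ under $\pari \mapsto B(\pari,\,-)$), so your argument explains where those signs come from rather than positing them. What your approach buys is conceptual clarity and an equivariance proof that is automatic from naturality and invariance of $B$, with no basis bookkeeping; what the paper's approach buys is the explicit formula for $\psi$, which is used immediately afterwards in Corollary \ref{corollary} to transport invariants (e.g.\ $\psi(I) = I_{pq}$, $\psi(\rho) = \rho_{pq}$, $\psi(\gamma) = \gamma_{pq}$). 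If you adopted your route in the paper, you would still want to unwind the composite into the explicit sign-weighted sum before carrying out those later computations. One small point worth making explicit in your write-up: the $\so$-action on $\I$ here is the bracket action of the vector-field realization ($[\alpha_{ij},\,\tfrac{\partial}{\partial x^k}]$, etc.), and one should check it agrees with the standard matrix action of $\so$ on $\br^{n}$ used in your invariance computation; this is routine and the identification is implicit in the paper as well.
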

\begin{proof}
We define $\psi:\hm(\I^{\wedge k},\, \h)\longrightarrow \I^{\wedge k} \otimes \h$,
\begin{equation}
	\begin{split}
		\psi(\phi) :=& \sum_{s=0}^{k} \underset{p+1\le i_{k-s+1} < i_{k-s+2}< \cdots < i_{k} \le n}{\sum\limits_{1\le i_{1} < i_{2} < \cdots < i_{k-s} \le p}} (-1)^{s} z \otimes \phi(z),
	\end{split}
\end{equation}
 for all $z=  \tfrac{\partial}{\partial x^{i_1}}
 \wedge \tfrac{\partial}{\partial x^{i_2}} \wedge
 \ldots
 \wedge \frac{\partial}{\partial x^{i_k}} \in\I^{\wedge k}$ and $\phi \in \hm(\I^{\wedge k}, \h)$.\\
$\psi$ is an isomorphism and $\so$-equivariant. The proof of isomorphism is straightforward.  To show the equivariant property, we start by looking at the action of $\so$ on $\hm(\I^{\wedge k },\, \h)$ and $\I^{\wedge k } \otimes \h$.
For $g \in \so$ and $\phi \in \hm(\I^{\wedge k},\, \h) $, the action of $g$ on $\phi$ is given by;
\begin{equation}
	(g\cdot \phi)(z)= [g,\,\phi(z)] + \phi([z,\,g])
	=  [g,\,\phi(z)] -  \phi([g,\,z]), {\rm{\ for \ all }} \ z \in \I^{\wedge k}. 
\end{equation}
By defintion, we have 
\begin{equation*}
	\begin{split}
		\psi(g\cdot\phi)&= \sum_{s=0}^{k} \underset{p+1\le i_{k-s+1} < i_{k-s+2}< \cdots < i_{k} \le n}{\sum\limits_{1\le i_{1} < i_{2} < \cdots < i_{k-s} \le p}} (-1)^{s} z \otimes (g\cdot\phi)(z),\\
		&= \sum_{s=0}^{k} \underset{p+1\le i_{k-s+1} < i_{k-s+2}< \cdots < i_{k} \le n}{\sum\limits_{1\le i_{1} < i_{2} < \cdots < i_{k-s} \le p}} (-1)^{s} z \otimes [g,\,\phi(z)]\\
		&- \sum_{s=0}^{k} \underset{p+1\le i_{k-s+1} < i_{k-s+2}< \cdots < i_{k} \le n}{\sum\limits_{1\le i_{1} < i_{2} < \cdots < i_{k-s} \le p}} (-1)^{s} z \otimes \phi([g,\,z])
	\end{split}  
\end{equation*}
\begin{equation*}
	\begin{split}
		g \cdot \psi(\phi) = \sum_{s=0}^{k} \underset{p+1\le i_{k-s+1} < i_{k-s+2}< \cdots < i_{k} \le n}{\sum\limits_{1\le i_{1} < i_{2} < \cdots < i_{k-s} \le p}} (-1)^{s} [g,\,z] \otimes \phi(z)\\
		+ \sum_{s=0}^{k} \underset{p+1\le i_{k-s+1} < i_{k-s+2}< \cdots < i_{k} \le n}{\sum\limits_{1\le i_{1} < i_{2} < \cdots < i_{k-s} \le p}} (-1)^{s} z \otimes [g,\,\phi(z)]
	\end{split}	
\end{equation*}

We show for all $g \in \so$ and $z \in\I^{\wedge k} $
\begin{equation*}
	\begin{split}
		&\sum_{s=0}^{k} \underset{p+1\le i_{k-s+1} < i_{k-s+2}< \cdots < i_{k} \le n}{\sum\limits_{1\le i_{1} < i_{2} < \cdots < i_{k-s} \le p}} (-1)^{s} [g,\,z] \otimes \phi(z)\\
		=&- \sum_{s=0}^{k} \underset{p+1\le i_{k-s+1} < i_{k-s+2}< \cdots < i_{k} \le n}{\sum\limits_{1\le i_{1} < i_{2} < \cdots < i_{k-s} \le p}} (-1)^{s} z \otimes \phi([g,\,z])
	\end{split}
\end{equation*}
and then conclude $\psi(g\cdot\phi) = g \cdot \psi(\phi)$.
\end{proof}
The details of Lemma \ref{invariant3} are in my dissertation. 

\begin{corollary}\label{corollary}
There is a vector space isomorphism 
\begin{equation}
	[\hm(\I^{\wedge k},\, \h)]^{\so} \simeq [\I^{\wedge k} \otimes \h]^{\so}, \ for \ all \ k. 
\end{equation}
\end{corollary}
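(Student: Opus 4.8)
The plan is to deduce the corollary directly from Lemma \ref{invariant3}, where we already constructed the explicit map $\psi \colon \hm(\I^{\wedge k},\, \h) \to \I^{\wedge k} \otimes \h$ and established the two facts we need: that $\psi$ is a vector space isomorphism, and that $\psi$ is $\so$-equivariant, i.e. $\psi(g\cdot\phi) = g\cdot\psi(\phi)$ for all $g \in \so$. The general principle I would invoke is that any equivariant isomorphism of $\g$-modules restricts to an isomorphism of their invariant submodules; specializing to $\g = \so$, with domain $\hm(\I^{\wedge k},\, \h)$ and codomain $\I^{\wedge k}\otimes \h$, yields the stated isomorphism.

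Concretely, I would verify both inclusions. First, if $\phi \in [\hm(\I^{\wedge k},\, \h)]^{\so}$, then $g\cdot\phi = 0$ for every $g \in \so$, so equivariance gives $g\cdot\psi(\phi) = \psi(g\cdot\phi) = \psi(0) = 0$; hence $\psi(\phi) \in [\I^{\wedge k}\otimes \h]^{\so}$, and $\psi$ carries invariants into invariants. Since $\psi$ is injective, its restriction to $[\hm(\I^{\wedge k},\, \h)]^{\so}$ is injective as well.

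For surjectivity onto the invariant submodule, I would take $w \in [\I^{\wedge k}\otimes \h]^{\so}$ and, using that $\psi$ is onto, choose $\phi$ with $\psi(\phi) = w$. Then for each $g \in \so$ we have $\psi(g\cdot\phi) = g\cdot\psi(\phi) = g\cdot w = 0$, and injectivity of $\psi$ forces $g\cdot\phi = 0$. Thus $\phi \in [\hm(\I^{\wedge k},\, \h)]^{\so}$ with $\psi(\phi) = w$, so the restriction surjects onto the invariants. Combining the two directions gives the desired isomorphism for every $k$.

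Since the equivariance and bijectivity of $\psi$ were the substantive content of Lemma \ref{invariant3}, I do not expect any genuine obstacle here; the corollary is essentially formal. The only point that requires a moment of care is the use of injectivity of $\psi$ in the surjectivity step, which is what lets us upgrade the vanishing $\psi(g\cdot\phi) = 0$ to $g\cdot\phi = 0$ rather than leaving it at the level of the image.
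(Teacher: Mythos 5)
Your proposal is correct and follows the same route as the paper: the paper's proof likewise deduces the corollary from the $\so$-equivariance (and bijectivity) of the map $\psi$ constructed in Lemma \ref{invariant3}, merely stating this in one line before listing explicit generators of the invariants. Your write-up simply makes explicit the formal argument --- equivariance carries invariants to invariants, and injectivity plus surjectivity of $\psi$ give bijectivity on the invariant subspaces --- that the paper leaves implicit.
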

\begin{proof}
The following results follow because $\psi$ in Lemma \ref{invariant3} is $\so$-equivariant. The invariants [$\hm(\I^{\wedge*},\, \h)]^{\so}$ can be found by direct calculations although this is difficult. 

For $k=1$ , $ [\hm(\I,\, \h)]^{\so} = \langle I \rangle$, 
\begin{equation}
	\psi(I) :=\sum\limits_{i=1}^p  \pari  \otimes I(\pari)
	-\sum\limits_{i=p+1}^n \pari  \otimes I(\pari) = I_{pq}.
\end{equation}

where $$I(\pari)=\pari \, , i=1,2,\cdots,n. $$
Note that $  \langle I \rangle =  \langle -I \rangle $. 

For $k = 2$,  $ [\hm(\I^{\wedge2},\, \h)]^{\so} = \langle \rho \rangle$,
\begin{equation}
	\begin{split}
		\psi(\rho):=&\sum\limits_{ 1\le i<j \le p}  \pari \wedge \parj  \otimes \rho( \pari \wedge \parj ) \,\\
		+&\sum\limits_{ p+1\le i<j \le n}\pari \wedge \parj   \otimes \rho( \pari \wedge \parj )\\
		-&\underset{p+1\le j \le n}{\sum\limits_{ 1\le i\le p}} \pari \wedge \parj   \otimes \rho( \pari \wedge \parj ) = \rho_{pq} \\
		\end{split}
\end{equation}
where \begin{equation}
	\begin{split}
		\rho(\pari \wedge \parj ) & = \alpha_{ij} , \, 1\le i\le j\le p,\\
		\rho(\pari \wedge \parj )& = - \alpha_{ij},\, p+1\le i\le j\le n,\\
		\rho(\pari \wedge \parj )&= \beta_{ij},\, 1\le i\le p, \, p+1\le j\le n.		
	\end{split}
\end{equation}

For $k = n-1$, $[\hm(\I^{\wedge(n-1)},\, \h)]^{\so} = \langle \beta \rangle$,
\begin{equation}
	\begin{split}
		\psi(\beta) :&= \sum\limits_{i=1}^p \parone  \wedge \cdots  \wedge \hat{\pari}  \wedge \cdots \wedge \parn  \otimes \beta(\parone  \wedge \cdots  \wedge \hat{\pari}  \wedge \cdots  \wedge \parn)\\
		&- \sum\limits_{i=p+1}^n \parone  \wedge \cdots  \wedge \hat{\pari}  \wedge \cdots  \wedge \parn \otimes \beta(\parone  \wedge \cdots   \wedge \hat{\pari}  \wedge \cdots  \wedge \parn)\\		
		& = \beta_{pq}  
	\end{split}
\end{equation}
where $$\beta(\parone \wedge \cdots \wedge \hat{\pari} \wedge \cdots  \wedge \parn)= (-1)^{i+1}\pari,i= 1,2, \cdots , p $$
$$\beta(\parone \wedge \cdots  \wedge \hat{\pari} \wedge \cdots  \wedge \parn)= (-1)^{i}  \pari ,i= p+1, \cdots  , n.$$ 
For $k = n-2$, $[\hm(\I^{\wedge(n-2)},\, \h)]^{\so} = \langle \gamma \rangle$,
\begin{equation}
	\begin{split}
		\psi(\gamma) &:= \\
		&\sum\limits_{ 1\le i<j \le p} \parone \wedge  \cdots \hat{\pari} \cdots \hat{\parj} \cdots  \wedge \parn \otimes \gamma(\parone \wedge  \cdots \hat{\pari} \cdots \hat{\parj} \cdots  \wedge \parn)\\
		-&\sum\limits_{ p+1\le i<j \le n} \parone \wedge \cdots  \hat{\pari} \cdots \hat{\parj} \cdots   \wedge \parn \otimes \gamma(\parone \wedge  \cdots \hat{\pari} \cdots \hat{\parj} \cdots   \wedge \parn)\\
		-&\underset{p+1\le j \le n}{\sum\limits_{1\le i \le p}} \parone \wedge \cdots  \hat{\pari} \cdots \hat{\parj} \cdots  \wedge \parn \otimes \gamma(\parone \wedge \cdots \hat{\pari} \cdots \hat{\parj} \cdots  \wedge \parn)\\
		& = \gamma_{pq}
	\end{split}
\end{equation}
where
\begin{equation}
	\begin{split}
		\gamma(\parone \wedge \cdots \hat{\pari} \cdots \hat{\parj} \cdots \wedge \parn) & = (-1)^{i+j} \alpha_{ij},\ 1\le i \le j \le p+1,\\
		\gamma(\parone \wedge \cdots \hat{\pari} \cdots \hat{\parj} \cdots \wedge \parn)& = (-1)^{i+j+1} \alpha_{ij}, \ p+1\le i \le j \le n,\\
		\gamma(\parone \wedge \cdots \hat{\pari} \cdots \hat{\parj} \cdots \wedge \parn) & = (-1)^{i+j+1} \beta_{ij}, \ 1 \le i \le p, \, p+1 \le j\le n.
	\end{split}
\end{equation}
$[\hm(\I^{\wedge k},\, \h)]^{\so} = \{0\}, $ whenever $ k\notin \{1,2,n-1, n-2\}$.
\end{proof}

\section{LIE ALGEBRA COHOMOLOGY OF $\h$ WITH COEFFICIENTS}
Let $\g$ be a Lie algebra over a ring $k$ and $V$ be any $\g$-module.
For each $n\geq0$, let  $\pi_{R}:\g\otimes \g^{\wedge(n+1)} \xrightarrow{} \g^{\wedge(n+2)}$  be the canonical projection $\pi_{R}(g_{1}\otimes g_{2}\wedge \cdots \wedge g_{n+2})=g_{1}\wedge g_{2}\wedge...\wedge g_{n+2}$, and $\pi^*_R:{\rm{Hom}}(\g^{\wedge(n+2)},V)\xrightarrow{} {\rm{Hom}}(\g \otimes \g^{\wedge(n+1)}, V)$ be the map induced by $\pi_{R}$.

Let $CR^n(\g)={\rm{coker}}[{\rm{Hom}}(\g^{\wedge(n+2)},V)\xrightarrow{\pi^*_{R}} {\rm{Hom}}(\g \otimes \g^{\wedge(n+1)}, V)]$ and  $HR^*(\g)$ be the cohomology of the complex  $CR^n(\g)$.

There is a short exact sequence
\begin{equation}
	\begin{split}
	0&\xrightarrow{} {\rm{Hom}}(\g^{\wedge(n+2)},V)\xrightarrow{\pi^*_R} {\rm{Hom}}(\g\otimes \g^{\wedge(n+1)}, V)\xrightarrow{} CR^n(\g) \xrightarrow{}0
	\end{split}
\end{equation}
which induces a long exact sequence
\begin{equation}
	\begin{split} \label{les1}
		\cdots \xrightarrow{}H^{n+2}_{\rm{Lie}}(\g;V)&\xrightarrow{\pi^*_{R}} H^{n+1}_{\rm{Lie}}(\g;\g^{'})\xrightarrow{} HR^{n}(\g) \xrightarrow{c_R}\\
		H^{n+3}_{\rm{Lie}}(\g;V)&\xrightarrow{\pi^*_{R}} H^{n+2}_{\rm{Lie}}(\g;\g^{'})\xrightarrow{} \cdots 
	\end{split}
\end{equation}
where $c_R$ is the connecting homomorphism and $\g^{'}= {\rm{Hom}}(\g, \,V ) $.

In this section my calculation is done with  $k=V=\br$.

Let $\alpha_{ij}^{*}$, $\beta_{ij}^{*}$ be the dual of $\alpha_{ij}$,  $\beta_{ij}$ respectively with respect to the basis \{$\alpha_{ij}$\} $\cup$
$ \{\beta_{ij}\}$ $\cup$ $\{\pari\}$ of $\h$, and let $dx^{i}$ be the dual of $\pari$. Let
\begin{equation}
	\begin{split}
		\gamma_{pq}^*=&\sum\limits_{ 1\le i<j \le p} (-1)^{i+j} dx^1 \wedge \cdots  \wedge \widehat{dx^i} \wedge \cdots  \wedge \widehat{dx^j} \wedge \cdots  \wedge dx^n \otimes \alpha_{ij}\\
		-&\sum\limits_{ p+1\le i<j \le n} (-1)^{i+j+1}dx^1 \wedge \cdots  \wedge \widehat{dx^i} \wedge \cdots  \wedge \widehat{dx^j} \wedge \cdots  \wedge dx^n \otimes \alpha_{ij}\\
		-&\underset{p+1\le j \le n}{\sum\limits_{1\le i \le p}} (-1)^{i+j+1} dx^1 \wedge \cdots  \wedge \widehat{dx^i} \wedge \cdots  \wedge \widehat{dx^j} \wedge \cdots  \wedge dx^n \otimes \beta_{ij}
	\end{split}
\end{equation}
\begin{lemma}\label{Serre} 
There is an isomorphism in Lie-algebra cohomology
\begin{equation}
	\begin{split}
		H^{*}_{\rm{Lie}}(\h;\mathbf{R})\simeq H^{*}_{\rm{Lie}}(\so);\mathbf{R}) \otimes [H^{*}_{\rm{Lie}}(\I;\mathbf{R})]^{\so}\\
		H^{*}_{\rm{Lie}}(\h;\,\h^{'})\simeq H^{*}_{Lie}(\so;\,\mathbf{R}) \otimes [H^{*}_{\rm{Lie}}(\I;\,\h^{'})]^{\so}
	\end{split}
\end{equation} 
where $\h^{'}={\rm{Hom}}(\h, \,\mathbf{R} ).$
\end{lemma}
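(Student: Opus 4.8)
The plan is to realize both isomorphisms as the degeneration of the Hochschild--Serre spectral sequence attached to the split extension $0 \to \I \to \h \to \so \to 0$, applied in turn to the coefficient modules $V=\br$ and $V=\h^{'}$. First I would write down the spectral sequence
$$E_2^{p,q} = H^p_{\rm{Lie}}(\so;\, H^q_{\rm{Lie}}(\I;\, V)) \Longrightarrow H^{p+q}_{\rm{Lie}}(\h;\, V),$$
where $\so$ acts on $H^q_{\rm{Lie}}(\I;\,V)$ through the conjugation action supplied by the splitting $\so \hookrightarrow \h$. Since $n=p+q\ge 4$, the algebra $\so$ is semisimple, so Weyl complete reducibility (equivalently the Casimir, i.e.\ Whitehead, argument) furnishes for every finite-dimensional $\so$-module $M$ a natural isomorphism $H^p_{\rm{Lie}}(\so;\,M)\cong H^p_{\rm{Lie}}(\so;\,\br)\otimes M^{\so}$. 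Taking $M=H^q_{\rm{Lie}}(\I;\,V)$ identifies the second page as
$$E_2^{p,q} \cong H^p_{\rm{Lie}}(\so;\,\br)\otimes \big[H^q_{\rm{Lie}}(\I;\,V)\big]^{\so},$$
which is exactly the right-hand side of the claimed isomorphism; for $V=\br$ the invariant factor is the one pinned down by Lemma~\ref{invariantone}. Over the field $\br$ there are no extension problems, so it remains only to prove that the sequence collapses at $E_2$.

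The heart of the matter is this collapse, which I would establish uniformly in $V$. The first step is to show that the left-edge (restriction) map $H^q_{\rm{Lie}}(\h;\,V)\to \big[H^q_{\rm{Lie}}(\I;\,V)\big]^{\so}=E_2^{0,q}$ is surjective, so that the whole column $p=0$ consists of permanent cycles. Here I use the semidirect structure $\h=\I\rtimes\so$: given an $\so$-invariant cocycle $c\in \hm(\I^{\wedge q},\,V)^{\so}$, extend it to $\tilde c\in \hm(\h^{\wedge q},\,V)$ by declaring $\tilde c=0$ whenever any argument lies in $\so$. Sorting the terms of $\delta\tilde c$ by how many $\so$-arguments occur, the purely-$\I$ part is $\delta_{\I}c=0$; the part with a single $\so$-argument $g$ reassembles, using $[\I,\I]=0$ and $[g,\I]\subseteq\I$, into $\pm(g\cdot c)$, which vanishes by $\so$-invariance of $c$; and every term with at least two $\so$-arguments still feeds an $\so$-argument into $\tilde c$ and hence vanishes. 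Thus $\tilde c$ is an $\h$-cocycle restricting to $c$, giving the surjectivity.

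The second step propagates this to all of $E_2$. The spectral sequence $E_\bullet(V)$ is a module over the multiplicative spectral sequence $E_\bullet(\br)$, with differentials obeying the Leibniz rule over it. The bottom row $E_2^{*,0}(\br)=H^*_{\rm{Lie}}(\so;\,\br)$ consists of permanent cycles, and under the splitting above $E_2(V)\cong H^*_{\rm{Lie}}(\so;\,\br)\otimes[H^*_{\rm{Lie}}(\I;\,V)]^{\so}$ is free over $H^*_{\rm{Lie}}(\so;\,\br)$ on the column $p=0$. Since each $d_r$ annihilates both the bottom row and, by the first step, the generating column $E_r^{0,*}(V)$, the Leibniz rule forces $d_r\equiv 0$ for every $r\ge 2$. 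Hence $E_2=E_\infty$, and specializing $V=\br$ and $V=\h^{'}$ yields the two asserted isomorphisms. The main obstacle is the first step: one must verify that the extension-by-zero genuinely is a cocycle, which reduces to careful sign bookkeeping in the Chevalley--Eilenberg differential and to checking that the mixed terms collapse precisely onto the $\so$-action on the cochain. The $\h^{'}$ case is subtler than the $\br$ case because the invariant fiber $[H^*_{\rm{Lie}}(\I;\,\h^{'})]^{\so}$ is not a priori concentrated in two rows, so a naive bidegree argument does not suffice and the module structure over the trivial-coefficient sequence is essential.
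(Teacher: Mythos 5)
Your proposal is correct, and it rests on the same tool as the paper --- the Hochschild--Serre spectral sequence of the ideal $\I$ with quotient $\h/\I\simeq\so$ --- but it takes a genuinely different route to the conclusion: the paper's entire proof is a one-sentence appeal to \cite{hochschild1953cohomology}, i.e.\ to the classical theorem that for an ideal with semisimple quotient and finite-dimensional coefficients the spectral sequence degenerates at $E_2$ and yields $H^{*}_{\rm{Lie}}(\g;V)\simeq H^{*}_{\rm{Lie}}(\g/\mathfrak{a};\br)\otimes[H^{*}_{\rm{Lie}}(\mathfrak{a};V)]^{\g/\mathfrak{a}}$, whereas you reprove that degeneration from scratch. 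Your two ingredients go beyond what the citation supplies: extension-by-zero of an $\so$-invariant $\I$-cocycle, which exploits the splitting $\h=\I\rtimes\so$, the ideal property $[\so,\I]\subseteq\I$, and $[\I,\I]=0$ to show that the restriction map $H^{q}_{\rm{Lie}}(\h;V)\to[H^{q}_{\rm{Lie}}(\I;V)]^{\so}$ is surjective (so the column $p=0$ consists of permanent cycles), and then the module structure of the coefficient spectral sequence over the multiplicative trivial-coefficient one to force $d_r\equiv 0$ on every page via the Leibniz rule. The citation buys the paper brevity; your argument buys self-containedness and makes visible exactly where semisimplicity enters (the $E_2$ identification) versus where the semidirect-product structure enters (edge surjectivity) --- noting that the classical theorem needs no splitting while your proof does, which is harmless here since the extension is split. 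One point you should make explicit: your first step starts from an $\so$-invariant \emph{cocycle}, but an element of $E_2^{0,q}=[H^{q}_{\rm{Lie}}(\I;V)]^{\so}$ is a priori only an invariant cohomology \emph{class}; that every such class admits an invariant representative follows from Weyl complete reducibility applied to the finite-dimensional complex $\hm(\I^{\wedge *},V)$ (taking $\so$-invariants commutes with cohomology), the same fact you already invoke for the $E_2$ identification. Your closing remark is also apt: even for $V=\br$ the invariants sit in rows $0$ and $n$, which still leaves room for a possible $d_{n+1}$, so the collapse genuinely requires an argument and not mere bidegree bookkeeping.
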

\begin{proof}
	 We apply the Hochschild-Serre spectral sequence \cite{hochschild1953cohomology} to
	the ideal $\I$ of $\h$ and use the isomorphism of Lie algebras $\h/\I \simeq \so$.
\end{proof}
From section section \ref{invariants}, $[\I^{\wedge 0}]^{\so} = \mathbf{R},$ 
$[\I^{\wedge n}]^{\so} = \langle v \rangle \,,{\rm{where}} \, v = \parone \wedge \partwo \wedge ... \wedge \parn,$ and
$[\I^{\wedge k}]^{\so} = \{0\}, \,k \notin \{0,n\}.$

\begin{theorem}\label{Lieideal}
	 For $p+q\ge 4$,
	 \begin{equation}
	 	H^{*}_{\rm{Lie}}(\h;\, \mathbf{R})\simeq H^{*}_{\rm{Lie}}(\so;\,\br) \otimes (\br \oplus \langle v^* \rangle ),
	 \end{equation}
where $v^* = dx^{1} \wedge dx^2 \wedge \cdots \wedge dx^n$.
 \end{theorem}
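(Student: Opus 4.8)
The plan is to take Lemma \ref{Serre} as the point of departure and reduce the computation to an invariant-theoretic one. Lemma \ref{Serre} already supplies the graded isomorphism
\[
H^{*}_{\rm{Lie}}(\h;\,\mathbf{R})\simeq H^{*}_{\rm{Lie}}(\so;\,\br)\otimes [H^{*}_{\rm{Lie}}(\I;\,\mathbf{R})]^{\so},
\]
so the entire content of the theorem is the identification of the right-hand tensor factor $[H^{*}_{\rm{Lie}}(\I;\,\mathbf{R})]^{\so}$ with $\br\oplus\langle v^{*}\rangle$. I would therefore spend no further effort on the spectral sequence itself and concentrate on this invariant factor.

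First I would compute $H^{*}_{\rm{Lie}}(\I;\,\mathbf{R})$ outright. Since $\I$ is abelian and the coefficients are trivial, every term of the Chevalley--Eilenberg coboundary $\delta$ vanishes: the adjoint action is zero on a trivial module, and the bracket terms are zero because $\I$ is abelian. Hence the complex has zero differential and
\[
H^{*}_{\rm{Lie}}(\I;\,\mathbf{R})\simeq \hm(\wedge^{*}\I,\,\mathbf{R})\simeq \wedge^{*}\I^{*}
\]
as graded $\so$-modules, where the $\so$-action is the one induced on $\wedge^{*}\I^{*}$ from the standard action on $\I$.

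Next I would pass to invariants, reducing $[\wedge^{*}\I^{*}]^{\so}$ to the already-known computation in Lemma \ref{invariantone}. The bilinear form $I_{p,\,q}$ is $\so$-invariant and nondegenerate, so it furnishes an isomorphism of $\so$-modules $\I\simeq\I^{*}$ sending $\pari$ to $\pm dx^{i}$; this induces $\wedge^{k}\I\simeq\wedge^{k}\I^{*}$ for each $k$ and hence $[\wedge^{k}\I^{*}]^{\so}\simeq[\wedge^{k}\I]^{\so}$. By Lemma \ref{invariantone} the right-hand side is $\mathbf{R}$ in degree $0$, is $\langle v\rangle$ in degree $n$, and vanishes otherwise. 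Transporting through the duality, the degree-$0$ invariant is the scalar line $\br$ and the degree-$n$ invariant is spanned by the image of $v$, namely $v^{*}=dx^{1}\wedge dx^{2}\wedge\cdots\wedge dx^{n}$; one checks directly that $v^{*}$ is $\so$-invariant because every $X\in\so$ is traceless (from $X^{T}I_{p,\,q}=-I_{p,\,q}X$), so $X$ acts as zero on the top exterior power $\wedge^{n}\I^{*}$. Assembling, $[H^{*}_{\rm{Lie}}(\I;\,\mathbf{R})]^{\so}\simeq\br\oplus\langle v^{*}\rangle$, and substituting into the isomorphism of Lemma \ref{Serre} yields the theorem.

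The main obstacle I anticipate is not the collapse of the Hochschild--Serre spectral sequence, which is packaged into Lemma \ref{Serre}, but rather the vanishing of the intermediate invariants $[\wedge^{k}\I^{*}]^{\so}$ for $0<k<n$, which is exactly where the hypothesis $p+q\ge 4$ is felt. This is the dual of the vanishing asserted in Lemma \ref{invariantone}; the conceptual reason is that the standard representation $\I$ of the semisimple algebra $\so$ has exterior powers carrying no trivial summand for $0<k<n$, as exemplified by $\wedge^{2}\I\simeq\so$, whose invariants are the center $Z(\so)=0$. I would verify these vanishings by appealing to Lemma \ref{invariantone} through the duality above rather than recomputing them, so that the only genuinely new step is the explicit identification of the surviving top-degree class as $v^{*}$.
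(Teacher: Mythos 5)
Your proposal is correct and follows essentially the same route as the paper: invoke Lemma \ref{Serre} to reduce everything to the factor $[H^{*}_{\rm{Lie}}(\I;\,\mathbf{R})]^{\so}$, and then identify that factor with $\br \oplus \langle v^{*}\rangle$ by quoting the invariants of Lemma \ref{invariantone}. The only cosmetic difference is the dualization step --- you transport invariants through the $\so$-equivariant isomorphism $\I \simeq \I^{*}$ furnished by the form $I_{p,\,q}$ (after noting the Chevalley--Eilenberg differential vanishes since $\I$ is abelian with trivial coefficients), whereas the paper commutes invariants with $\hm(-,\br)$ via $[\hm(\wedge^{*}\I,\,\br)]^{\so}\simeq \hm([\wedge^{*}\I]^{\so},\,\br)$; both are legitimate and lead to the same computation.
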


\begin{proof}
$[H^{*}_{\rm{Lie}}(\I;\,\mathbf{R})]^{\so}$ is the cohomology of the cochain complex $[\hm(\wedge ^{*}\I,\,\mathbf{R} )]^{\so}$. Note that $$[{\rm{Hom}}(\wedge ^{*}\I,\,\mathbf{R} )]^{\so} \simeq {\rm{Hom}}
([\wedge ^{*}\I]^{\so},\,\mathbf{R}).$$ So we will compute $[H^{*}_{\rm{Lie}}(\I;\,\mathbf{R})]^{\so}$ from the complex:
\begin{equation}
	\begin{split}
		&\hm([\I^{\wedge0}]^{\so},\,\mathbf{R})\xrightarrow{\delta}\hm([\I]^{\so},\,\mathbf{R})\xrightarrow{\delta}\hm([\I^{\wedge2}]^{\so},\,\mathbf{R})\xrightarrow{\delta} \cdots\\
		&\xrightarrow{\delta}\hm([\I^{\wedge n}]^{\so},\,\mathbf{R})\xrightarrow{\delta} 0. 
	\end{split}
\end{equation} 

[$H^{0}_{\rm{Lie}}(\I;\,\br)]^{\so} \simeq \mathbf{R}$, [$H^{n}_{\rm{Lie}}(\I;\,\br)]^{\so} \simeq {\hm}( \langle v \rangle \,\,\br)$ and

[$H^{k}_{\rm{Lie}}(\I;\,\mathbf{R}) ]^{\so}\simeq \{0\}$ for $k \notin \{0,n\}$. As a consequence, $[H^{*}_{\rm{Lie}}(\I;\,\br)]^{\so} \simeq \br \oplus \langle v^* \rangle $.
By Lemma \ref{Serre}, $H^{*}_{\rm{Lie}}(\h;\mathbf{R})\simeq H^{*}_{\rm{Lie}}(\so;\br) \otimes (\br \oplus \langle v^* \rangle )$.
\end{proof}
\begin{theorem} 
	There is an isomorphism in Lie-algebra cohomology
\begin{equation}
	H^{*}_{\rm{Lie}}(\h;\,\h^{'}) \simeq H^{*}_{\rm{Lie}}(\so;\,\mathbf{R}) \otimes \langle\beta^{*}_{pq},\ \gamma^{*}_{pq}\rangle
\end{equation}
\end{theorem}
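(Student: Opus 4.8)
The plan is to push the computation onto the ideal $\I$ by means of Lemma \ref{Serre} and then to analyze a very short invariant complex. By Lemma \ref{Serre} we have $H^{*}_{\rm{Lie}}(\h;\,\h') \simeq H^{*}_{\rm{Lie}}(\so;\,\br)\otimes[H^{*}_{\rm{Lie}}(\I;\,\h')]^{\so}$, so it suffices to prove $[H^{*}_{\rm{Lie}}(\I;\,\h')]^{\so}\simeq\langle\beta^{*}_{pq},\,\gamma^{*}_{pq}\rangle$, concentrated in degrees $n-1$ and $n-2$. Since $\so$ is semisimple for $p+q\ge 3$, taking $\so$-invariants is exact and commutes with passing to cohomology, so I would instead compute the cohomology of the invariant subcomplex $[\hm(\I^{\wedge *},\,\h')]^{\so}$ under the Chevalley--Eilenberg differential.

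First I would pin down the invariant cochain spaces. As $\so$-modules the splitting $\h=\I\oplus\so$ dualizes to $\h'\simeq\I'\oplus\so'$, where $\so'$ is the coadjoint module (spanned by the $\alpha^{*}_{ij},\beta^{*}_{ij}$) and $\I'$ is the dual standard module (spanned by the $dx^{i}$). Using the adjunction $\hm(\I^{\wedge k},\,\h')\simeq(\I^{\wedge k}\otimes\h)'$ together with the identity $(W')^{\so}\simeq(W^{\so})'$ for finite-dimensional modules over a semisimple Lie algebra, I would dualize the adjoint invariants of Lemma \ref{invarianttwo} and Corollary \ref{corollary}. This shows $[\hm(\I^{\wedge k},\,\h')]^{\so}$ is one-dimensional precisely for $k\in\{1,2,n-2,n-1\}$ and zero otherwise: in degree $1$ a generator $I^{*}$ valued in $\I'$, in degree $2$ a generator $\rho^{*}$ valued in $\so'$, in degree $n-2$ the class $\gamma^{*}_{pq}$ valued in $\so'$, and in degree $n-1$ the class $\beta^{*}_{pq}$ valued in $\I'$. (When $n=4$ the degrees $2$ and $n-2$ coincide and the space is the two-dimensional $\langle\rho^{*},\gamma^{*}_{pq}\rangle$; I would carry this case along with the generic one.)

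The heart of the argument is the observation that $\so'$ is a \emph{trivial} $\I$-submodule of $\h'$: for $g\in\I$ and $f\in\so'$ one has $(g\cdot f)(x)=-f([g,x])$ with $[g,x]\in\I$, on which $f$ vanishes, so $g\cdot f=0$. Because $\I$ is abelian the Chevalley--Eilenberg differential carries no bracket term, and hence every invariant cochain valued in $\so'$ is automatically a cocycle; in particular $\delta\gamma^{*}_{pq}=0$ and $\delta\rho^{*}=0$. The only differential that can be nonzero is $\delta I^{*}$, and since $g\cdot\I'\subseteq\so'$ its value $\delta I^{*}(\pari\wedge\parj)=\parj\cdot I^{*}(\pari)-\pari\cdot I^{*}(\parj)$ lands in $\so'$, the same module carrying $\rho^{*}$. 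Evaluating the $\I$-action $\parj\cdot dx^{i}$ on the basis $\{\alpha_{ij},\beta_{ij}\}$ and comparing with the explicit formula for $\rho^{*}$, I expect $\delta I^{*}=c\,\rho^{*}$ with $c\neq 0$. This single nonvanishing evaluation is the main obstacle, and the whole result rests on $c\neq0$ (and, in the collision case $n=4$, on $\delta I^{*}$ having no $\gamma^{*}_{pq}$-component).

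With these two facts in hand the invariant complex reduces to $0\to\langle I^{*}\rangle\xrightarrow{\delta}\langle\rho^{*}\rangle\to0$ in low degrees and $0\to\langle\gamma^{*}_{pq}\rangle\xrightarrow{0}\langle\beta^{*}_{pq}\rangle\to0$ in high degrees, with everything else zero. The isomorphism $\delta I^{*}$ makes both $H^{1}$ and $H^{2}$ vanish, while $\delta\gamma^{*}_{pq}=0$ leaves $H^{n-2}=\langle\gamma^{*}_{pq}\rangle$ and $H^{n-1}=\langle\beta^{*}_{pq}\rangle$; the same bookkeeping in the $n=4$ case yields $\langle\gamma^{*}_{pq}\rangle$ in degree $2$ and $\langle\beta^{*}_{pq}\rangle$ in degree $3$. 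Thus $[H^{*}_{\rm{Lie}}(\I;\,\h')]^{\so}\simeq\langle\beta^{*}_{pq},\,\gamma^{*}_{pq}\rangle$, and substituting this into Lemma \ref{Serre} gives $H^{*}_{\rm{Lie}}(\h;\,\h')\simeq H^{*}_{\rm{Lie}}(\so;\,\br)\otimes\langle\beta^{*}_{pq},\,\gamma^{*}_{pq}\rangle$, as claimed.
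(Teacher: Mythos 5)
Your proposal is correct, and it shares its skeleton with the paper's proof: both reduce via Lemma \ref{Serre} to computing $[H^{*}_{\rm{Lie}}(\I;\,\h')]^{\so}$, and both identify the invariant (co)chain spaces as one-dimensional exactly in degrees $1,2,n-2,n-1$ using Lemma \ref{invarianttwo}. Where you genuinely diverge is in how that cohomology is extracted. The paper never touches the cochain differential: it computes the Lie-algebra \emph{homology} $[H^{\rm{Lie}}_{*}(\I;\,\h)]^{\so}$ from the invariant chain complex, using the asserted differentials $\delta(\rho_{pq})=I_{pq}$, $\delta(\beta_{pq})=0$, $\delta(\gamma_{pq})=0$, and then dualizes by the universal coefficient theorem. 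You stay in cohomology and attack the invariant cochain complex directly. Your route buys two things the paper does not have: a structural reason for the vanishing differentials (the annihilator $\so'$ of $\I$ in $\h'$ is a \emph{trivial} $\I$-module and $\I$ is abelian, so every invariant $\so'$-valued cochain is automatically a cocycle, giving $\delta\rho^{*}=0$ and $\delta\gamma^{*}_{pq}=0$ conceptually rather than by fiat), and explicit attention to the degree collision at $n=4$, which the paper passes over. The paper's route buys economy: it reuses homology differentials stated elsewhere and the UCT, and it avoids your one unfinished step, the constant $c$ in $\delta I^{*}=c\,\rho^{*}$, which you only ``expect'' to be nonzero. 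That expectation is justified and is in fact equivalent to the paper's own unproved assertion: since the invariant cochain complex is the linear dual of the invariant chain complex (this is the paper's identification $[\hm(\wedge^{*}\I,\,\h')]^{\so}\simeq\hm([\wedge^{*}\I\otimes\h]^{\so},\,\mathbf{R})$), your $\delta I^{*}$ is the transpose of the paper's $\delta:\langle\rho_{pq}\rangle\to\langle I_{pq}\rangle$, so $c=\langle\delta I^{*},\rho_{pq}\rangle=\langle I^{*},\delta\rho_{pq}\rangle\neq 0$ precisely because $\delta(\rho_{pq})=I_{pq}$; alternatively a two-line computation with $[\pari,\alpha_{ij}]=\parj$ and $[\parj,\alpha_{ij}]=-\pari$ confirms it. So your argument is sound and no less rigorous than the paper's, just written on the dual side.
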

\begin{proof}
$[H^{*}_{\rm{Lie}}(\I;\, \h^{'}]^{\so}$ is the cohomology of the cochain complex $[\hm(\wedge ^{*}\I,\,\h^{'})]^{\so}$.
Note that 
\begin{equation*}
	\begin{split}
	[\hm(\wedge ^{*}\I,\,\h^{'})]^{\so} &= [\hm(\wedge ^{*}\I,\,\hm (\h, \,\mathbf{R})]^{\so}\\
	&\simeq [\hm(\wedge ^{*}\I \otimes \h ,\,\mathbf{R} )]^{\so}\\
	&\simeq \hm([\wedge ^{*}\I \otimes \h ]^{\so},\,\mathbf{R})
	\end{split}
\end{equation*} 

We use some invariants provided in section \ref{invariants} to compute the Lie-algebra homology groups $[H^{\rm{Lie}}_{*}(\I;\h)]^{\so}$ and then apply the universal coefficient theorem to find $[H^{*}_{\rm{Lie}}(\I;\, \h^{'})]^{\so}$.

Lie-algebra homology groups $[H^{\rm{Lie}}_{*}(\I;\h)]^{\so}$ can be computed from the complex:
\begin{equation}
	\begin{split}
		[\I^{\wedge0} \otimes \h]^{\so}&\xleftarrow{\delta}[\I \otimes \h]^{\so}\xleftarrow{\delta}[\I^{\wedge2}\otimes \h]^{\so} \xleftarrow{\delta} \cdots\\
		\cdots&\xleftarrow{}[\I^{\wedge n-2 }\otimes \h]^{\so} \xleftarrow{\delta} [\I^{\wedge n-1 }\otimes \h]^{\so} \xleftarrow{} 0.
	\end{split}
\end{equation}

Recall that $[\I^{\wedge0} \otimes \h]^{\so}\simeq \{0\}$, $[\I \otimes \h]^{\so} \simeq \langle I_{pq} \rangle$, $[\I^{\wedge2} \otimes \h]^{\so} \simeq \langle \rho_{pq} \rangle $, $[\I^{\wedge(n-1)} \otimes \h]^{\so} \simeq \langle \beta_{pq} \rangle$ and $[\I^{\wedge(n-2)} \otimes \h]^{\so} \simeq \langle \gamma_{pq} \rangle$. \\

Since $\delta(I_{pq})=0$, $\delta(\rho_{pq}) = I_{pq} $, $\delta(\beta_{pq})=0, \, \delta(\gamma_{pq})= 0$ it follows that  $$[H^{\rm{Lie}}_0 (\I;\, \h)]^{\so}= \{0\},$$ $$[H^{\rm{Lie}}_1 (\I;\, \h)]^{\so} =\langle I_{pq} \rangle / \langle I_{pq} \rangle \simeq \{0\},$$ $$[H^{\rm{Lie}}_2 (\I;\, \h)]^{\so}= \{0\}, $$ $$[H_{n-1}^{\rm{Lie}}(\I; \,\h)]^{\so} = \langle \beta_{pq} \rangle, $$ and  $$[H_{n-2}^{\rm{Lie}}(\I; \,\h)]^{\so} = \langle \gamma_{pq} \rangle.$$
Consequently,  $[H^{\rm{Lie}}_{*}(\I;\h)]^{\so}$  has two classes; $\langle \beta_{pq} \rangle$ and $\langle \gamma_{pq}  \rangle $, and  $$[H^{\rm{Lie}}_{*}(\I;\h)]^{\so} \simeq  \langle \beta_{pq}, \gamma_{pq}  \rangle.$$
By the universal coefficient theorem, $$[H^{*}_{\rm{Lie}}(\I;\, \h^{'})]^{\so} \simeq \hm(\ [H^{\rm{Lie}}_{*}(\I;\h)]^{\so}
;\, \mathbf{R}).$$
Therefore, $$[H^{*}_{\rm{Lie}}(\I;\, \h^{'})]^{\so} \simeq  \langle \beta^{*}_{pq}, \gamma^{*}_{pq} \rangle, $$where $ \beta^{*}_{pq}$ and $\gamma^{*}_{pq} $ are the dual of  $ \langle \beta_{pq} \rangle, \langle \gamma_{pq} \rangle $ respectively.\
By Lemma \eqref{Serre} , $$H^{*}_{\rm{Lie}}(\h;\,\h^{'}) \simeq H^{*}_{\rm{Lie}}(\so;\,\mathbf{R}) \otimes \langle\beta^{*}_{pq},\ \gamma^{*}_{pq}\rangle. $$
\end{proof}

\subsection{THE $HR^*(\h)$ COHOMOLOGY GROUPS}
\begin{theorem} 
	 For $m\ge 0$, there is a vector space isomorphism
\begin{equation}
	HR^m(\h) \simeq H^{m+3}_{\rm{Lie}}(\so; \,\mathbf{R}) \oplus (H^{m+3-n}_{\rm{Lie}}(\so; \, \mathbf{R}) \otimes \langle \gamma^*_{pq} \rangle).
\end{equation}	
\end{theorem}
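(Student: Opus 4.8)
The plan is to extract $HR^m(\h)$ from the long exact sequence \eqref{les1} by identifying the kernel and cokernel of the map $\pi^*_R$ on Lie-algebra cohomology. Specializing \eqref{les1} to $\g=\h$, $V=\br$ and taking the segment surrounding $HR^m(\h)$, namely
$$H^{m+2}_{\rm{Lie}}(\h;\br)\xrightarrow{\pi^*_R}H^{m+1}_{\rm{Lie}}(\h;\h')\longrightarrow HR^m(\h)\xrightarrow{c_R}H^{m+3}_{\rm{Lie}}(\h;\br)\xrightarrow{\pi^*_R}H^{m+2}_{\rm{Lie}}(\h;\h'),$$
exactness yields the short exact sequence
$$0\to\operatorname{coker}\bigl[\pi^*_R\colon H^{m+2}_{\rm{Lie}}(\h;\br)\to H^{m+1}_{\rm{Lie}}(\h;\h')\bigr]\to HR^m(\h)\to\ker\bigl[\pi^*_R\colon H^{m+3}_{\rm{Lie}}(\h;\br)\to H^{m+2}_{\rm{Lie}}(\h;\h')\bigr]\to0.$$
Since every term is an $\br$-vector space, this sequence splits, so it suffices to compute the indicated kernel and cokernel of $\pi^*_R$.

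To do so I would invoke the two preceding theorems, which give, via Hochschild--Serre for the ideal $\I\triangleleft\h$ (Lemma \ref{Serre}), the factorizations
$$H^{k}_{\rm{Lie}}(\h;\br)\simeq H^{k}_{\rm{Lie}}(\so;\br)\oplus\bigl(H^{k-n}_{\rm{Lie}}(\so;\br)\otimes\langle v^*\rangle\bigr),$$
$$H^{k}_{\rm{Lie}}(\h;\h')\simeq\bigl(H^{k-n+1}_{\rm{Lie}}(\so;\br)\otimes\langle\beta^*_{pq}\rangle\bigr)\oplus\bigl(H^{k-n+2}_{\rm{Lie}}(\so;\br)\otimes\langle\gamma^*_{pq}\rangle\bigr),$$
where $v^*$ lies in $\I$-degree $n$, while $\beta^*_{pq}$ and $\gamma^*_{pq}$ lie in $\I$-degrees $n-1$ and $n-2$. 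Because $\pi^*_R$ is induced by the $\so$-equivariant projection $\pi_R$ and is compatible with the Hochschild--Serre filtration, it respects these decompositions: it acts as the identity on the common $H^*_{\rm{Lie}}(\so;\br)$ factor and lowers the $\I$-degree by one. Hence the whole computation reduces to the induced map on the $\so$-invariant $\I$-level data, $(\pi^*_R)_\I\colon[H^{b}_{\rm{Lie}}(\I;\br)]^{\so}\to[H^{b-1}_{\rm{Lie}}(\I;\h')]^{\so}$.

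The heart of the argument, and the step I expect to be the main obstacle, is the explicit cochain-level evaluation of $(\pi^*_R)_\I$ on the two surviving generators, in $\I$-degrees $0$ and $n$. The degree-zero class $\br$ is forced to zero, since its image would sit in $\I$-degree $-1$. For the top class one unwinds $\pi^*_R(v^*)(g_1\otimes g_2\wedge\cdots\wedge g_n)=v^*(g_1\wedge g_2\wedge\cdots\wedge g_n)$ under the adjunction $\hm(\h\otimes\I^{\wedge(n-1)},\br)\simeq\hm(\I^{\wedge(n-1)},\h')$ and evaluates on the basis $\{\parone,\ldots,\parn\}$ of $\I$; this shows $\pi^*_R(v^*)$ is exactly the functional dual to $\beta_{pq}$, up to a nonzero sign, so that $\pi^*_R(v^*)=\pm\,\beta^*_{pq}$. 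The delicate part is tracking the alternating signs from the hat-omissions in $\beta_{pq}$ together with the split signature built into $\alpha_{ij}$ and $\beta_{ij}$, so that the pairing against $\beta_{pq}$ comes out nonzero. Granting this, $(\pi^*_R)_\I$ carries $\langle v^*\rangle$ isomorphically onto $\langle\beta^*_{pq}\rangle$, while $\gamma^*_{pq}$ (in $\I$-degree $n-2$) is never in the image, as that would require a source class in $\I$-degree $n-1$, where $[H^*_{\rm{Lie}}(\I;\br)]^{\so}$ vanishes.

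Feeding this back, the kernel of $\pi^*_R$ on $H^{m+3}_{\rm{Lie}}(\h;\br)$ is precisely the trivial-$\I$ summand $H^{m+3}_{\rm{Lie}}(\so;\br)$ (the $v^*$-summand injects), and the cokernel of $\pi^*_R$ into $H^{m+1}_{\rm{Lie}}(\h;\h')$ is precisely the $\gamma^*_{pq}$-summand $H^{m+3-n}_{\rm{Lie}}(\so;\br)\otimes\langle\gamma^*_{pq}\rangle$ (the $\beta^*_{pq}$-summand being exhausted by the image). Substituting into the split short exact sequence gives
$$HR^m(\h)\simeq H^{m+3}_{\rm{Lie}}(\so;\br)\oplus\bigl(H^{m+3-n}_{\rm{Lie}}(\so;\br)\otimes\langle\gamma^*_{pq}\rangle\bigr),$$
which is the assertion; when $m+3-n<0$ the second summand vanishes automatically, so no separate low-degree analysis is required.
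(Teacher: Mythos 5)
Your proposal is correct and follows essentially the same route as the paper: both read $HR^m(\h)$ off the long exact sequence \eqref{les1} as the (split) extension of $\ker\bigl[\pi^*_R\colon H^{m+3}_{\rm{Lie}}(\h;\br)\to H^{m+2}_{\rm{Lie}}(\h;\h')\bigr]\simeq H^{m+3}_{\rm{Lie}}(\so;\br)$ by $\operatorname{coker}\pi^*_R\simeq H^{m+3-n}_{\rm{Lie}}(\so;\br)\otimes\langle\gamma^*_{pq}\rangle$, using exactly the same two facts about $\pi^*_R$, namely that it annihilates the classes pulled back from $\so$ and carries $\theta\otimes v^*$ to $\theta\otimes\beta^*_{pq}$. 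The only difference is expository: you spell out the splitting of the extracted short exact sequence and the cochain-level verification that $\pi^*_R(v^*)=\pm\,\beta^*_{pq}$, both of which the paper asserts without detail.
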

\begin{proof}
	We compute $HR^*(\h)$ from the long exact sequence \eqref{les1} 
\begin{equation*}
		\begin{split}
			HR^{m}(\h) &\xrightarrow{c_R}H^{m+3}_{\rm{Lie}}(\h;\br)\xrightarrow{\pi^*_{R}} H^{m+2}_{\rm{Lie}}(\h;\h^{'})\xrightarrow{}\\
			HR^{m+1}(\h)&\xrightarrow{c_R} H^{m+4}_{\rm{Lie}}(\h;\br)\xrightarrow{} H^{m+3}_{\rm{Lie}}(\h;\h^{'}) \xrightarrow{} \cdots 
		\end{split}
\end{equation*}
For $ \theta \in H^{m+3}_{\rm{Lie}}(\so;\, \mathbf{R})$, $\pi^*_R(\theta)= 0 \  \rm{in}   \  H^{m+2}_{\rm{Lie}}(\h;\,\h^{'})$. Consequently, we have  $\pi^*_{R}[H^{m+3}_{\rm{Lie}}(\so;\mathbf{R})]= 0$, since $\theta$ is arbitrary.\\
For $\theta \, \otimes \, v^* \in H^{m+3-n}_{\rm{Lie}}(\so;\mathbf{R}) \otimes \langle v^* \rangle$, $\pi^*_R(\theta \, \otimes \, v^*)=\theta \otimes \beta^*_{pq} \  \rm{ in } \ H^{m+2}_{\rm{Lie}}(\h;\,\h^{'}),$ so  
$\theta \, \otimes \,\beta^*_{pq}$ will be trivial in $HR^{m+1}(\h)$ and $\theta \, \otimes \, \gamma^*_{pq} \in {\rm{Im}}[H^{m+2}_{\rm{Lie}}(\h;\h^{'})\xrightarrow{} HR^{m+1}(\h)]$. 

We conclude from the above that ${\rm{Im}}c_R={\rm{ker}}\pi^*_{R} \simeq H^{m+3}_{\rm{Lie}}(\so; \,R)$ and ${\rm{coker}}\pi^*_{R} \simeq H^{m+3-n}_{\rm{Lie}}(\so; \, \boldsymbol{R}) \otimes \langle \gamma^*_{pq} \rangle$.

There are also two classes in $HR^m(\h)$;
\begin{enumerate}
	\item $\theta'$, where $c_R(\theta')=\theta \in  H^{m+3}_{\rm{Lie}}(\so, \boldsymbol {R})$ (Those in ${\rm{ker}}\pi^*_{R}$)
	\item $\theta \, \otimes \, \gamma^*_{pq}$, where $\theta \in H^{i}_{\rm{Lie}}(\so, \boldsymbol {R})$ and $i=0,1,2,\cdots $ (Those in coker$\pi^*_{R}$).
\end{enumerate}
Finally, 
\begin{equation*}
	HR^m(\h) \simeq H^{m+3}_{\rm{Lie}}(\so; \,R) \oplus (H^{m+3-n}_{\rm{Lie}}(\so; \, \mathbf{R}) \otimes \langle \gamma^*_{pq} \rangle), {\rm{where}}\, m\geq 0.
\end{equation*}	
\end{proof}

\section{LEIBNIZ COHOMOLOGY OF $\h$ WITH COEFFICIENTS} \label{Leibniz}
Let $\g$ be a Lie algebra over a ring $k$ and $V$ be any $\g$-module. For each $n\geq 0$,  let $\pi_{\rm{rel}} :\g^{\otimes(n+2)}  \xrightarrow{}\g^{\wedge(n+2)}$ be the canonical projection map 	$\pi_{{\rm{rel}}}(g_{1}\otimes g_{2}\otimes \ldots \otimes g_{n+2})=g_{1}\wedge g_{2}\wedge \ldots \wedge g_{n+2}, $ and $\pi^*_{{\rm{rel}}}:{\rm{Hom}}(\g^{\wedge(n+2)},V) \longrightarrow {\rm{Hom}}(\g ^{\otimes(n+2)},V )$ be the map induced by $\pi_{{\rm{rel}}}$.\\
We define $C_{{\rm{rel}}}^n(\g) := {\rm{Coker}}[\hm (\g ^{\wedge(n+2)},\,V )  \xrightarrow{\mathit{\pi^*_{\rm{rel}}}} \hm (\g ^{\otimes(n+2)},\,V)]$ and $H^*_{\rm{rel}}(\g)$ be the cohomology of the complex $C^*_{{\rm{rel}}}(\g)$.
	
There is a short exact sequence  
$$0 \longrightarrow {\rm{Hom}}(\g ^{\wedge(n+2)},\, V )  \xrightarrow{\mathit{\pi^*_{\rm{rel}}}} {\rm{Hom}}(\g ^{\otimes(n+2)},\,V) \longrightarrow C_{\rm{rel}}^n(\g ) \longrightarrow 0$$
which induces a long exact sequence of cohomology 
\begin{equation}\label{les2}
	\begin{split}
		\cdots \longrightarrow H^{n+2}_{\rm{Lie}}(\g ;\,V)  &\xrightarrow{\mathit{\pi^*_{\rm{rel}}}} HL^{n+2}(\g ;\,V) \xrightarrow{} H_{rel}^{n}(\g) \xrightarrow{\mathit{c_{\rm{rel}}}}\\
		H^{n+3}_{Lie}(\g ;\,V )  &\xrightarrow{\mathit{\pi^*_{\rm{rel}}}}  HL^{n+3}(\g ;\,V ) \longrightarrow H_{\rm{rel}}^{n+1}(\g ) \xrightarrow{\mathit{c_{\rm{rel}}}} \cdots
	\end{split}
\end{equation}

where $c_{\rm{rel}}$ is the connecting homomorphism.


\subsection{PIRASHVILI SPECTRAL SEQUENCE} 
\begin{theorem}
: Let $\g$ be a Lie algebra over a field $\bf{F}$ and let $V$ be a left $\g$-module.  Then there is a first-quadrant spectral sequence converging to $H^*_{\rm{rel}}(\g ; \, V)$ with
$$  E_2^{m, \, k} \simeq HR^m (\g) \otimes HL^k ( \g ; \, V), \ \ \ m \geq 0, \ \ \ k \geq 0 ,  $$
provided that $HR^m ( \g)$ and $HL^k ( \g ; \, V)$ are finite dimensional vector spaces in each dimension.  
If this finite condition is not satisfied, then the completed tensor product $\widehat {\otimes}$ can be used.
\end{theorem}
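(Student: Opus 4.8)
The plan is to realize $H^*_{\rm{rel}}(\g;\,V)$ as the cohomology of the total complex of a first-quadrant double complex, and then to read off the claimed $E_2$ page from one of the two spectral sequences attached to that double complex. The guiding idea is that the two gradings of the double complex should separate the purely ``exterior-versus-tensor'' combinatorics recorded at the top of the tensor powers, which produces the factor $HR^m(\g)$, from the genuine Leibniz coboundary, which produces the factor $HL^k(\g;\,V)$. Since everything in sight is defined as $\hm(-,\,V)$ of tensor and exterior powers, and these are assembled from the two projections $\pi_{\rm{rel}}$ and $\pi_R$ already introduced, the combinatorics needed to build the double complex is available from the material preceding the statement.

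Concretely, I would construct a double complex $C^{m,\,k}(\g)$ whose $E_0$ term is the $\hm(-,\,V)$ of a tensor product of the relative $R$-complex $CR^{*}(\g)$ in the $m$-direction with the Leibniz cochain complex $CL^{*}(\g;\,V)=\hm(\g^{\otimes *},\,V)$ in the $k$-direction. The horizontal differential is the one induced by the projections $\pi_R$ used to define $CR^{*}(\g)$, and the vertical differential is the Leibniz coboundary $\delta$ from the definition of $HL^{*}(\g;\,V)$. The essential structural input is a natural filtration of the tensor powers $\g^{\otimes(n+2)}$ relative to the exterior powers $\g^{\wedge(n+2)}$, obtained by iterating the single antisymmetrization $\pi_{\rm{rel}}$. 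This filtration is exactly what identifies ${\rm Tot}\,C^{m,\,k}(\g)$ with the relative complex $C^{*}_{\rm{rel}}(\g)$ up to quasi-isomorphism, so that the total cohomology is $H^{*}_{\rm{rel}}(\g;\,V)$, matching the short and long exact sequences \eqref{les2}.

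With the double complex in place, I would run the spectral sequence whose first differential is the vertical (Leibniz) one. Taking vertical cohomology first collapses the $k$-direction to $HL^{k}(\g;\,V)$ while leaving the $CR$-chain groups intact in the $m$-direction; a second passage then produces $HR^{m}(\g)$. Because the ground ring is a field $\mathbf{F}$, a Künneth argument applies and the $E_2$ term factors as $E_2^{m,\,k}\simeq HR^{m}(\g)\otimes HL^{k}(\g;\,V)$ with no $\mathrm{Tor}$ correction. The finite-dimensionality hypothesis enters precisely at this interchange step: the cochain groups are built from $\hm(\g^{\otimes *},\,-)$, which are products rather than direct sums once $\g$ is infinite-dimensional, so commuting cohomology past the tensor product is literally valid only when each graded piece is finite-dimensional; otherwise the algebraic tensor product must be replaced by its completion $\widehat{\otimes}$, which is the stated caveat. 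Convergence is automatic since the double complex is first-quadrant.

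The main obstacle I anticipate is establishing the quasi-isomorphism between ${\rm Tot}\,C^{m,\,k}(\g)$ and $C^{*}_{\rm{rel}}(\g)$, that is, verifying that the decomposition of $\g^{\otimes(n+2)}$ modulo antisymmetrization is simultaneously compatible with both differentials and assembles exactly into the proposed double complex. Checking that the horizontal and vertical maps anticommute after a suitable sign normalization, and that the edge identifications agree with the induced maps $\pi_R^{*}$ and $\pi_{\rm{rel}}^{*}$, is the technical heart of the argument. Once this is in hand, the Künneth step and the finite-dimensional versus completed tensor product bookkeeping are comparatively routine.
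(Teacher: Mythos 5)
Your construction cannot work as stated, and the obstruction is visible inside this very paper. If there were a first-quadrant double complex, quasi-isomorphic after totalization to $C^{*}_{\rm{rel}}(\g)$, that is literally a tensor product of the complex $CR^{*}(\g)$ in the $m$-direction with $CL^{*}(\g;\,V)$ in the $k$-direction, then over the field $\mathbf{F}$ the K\"unneth theorem (no Tor terms, every vector space being flat) would give not merely the $E_2$ term but the full answer:
$$H^{n}_{\rm{rel}}(\g;\,V)\simeq \bigoplus_{m+k=n} HR^{m}(\g)\otimes HL^{k}(\g;\,V),$$
that is, both spectral sequences of your double complex would degenerate at $E_2$. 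But the Pirashvili spectral sequence does not degenerate at $E_2$ in general: the computation of $HL^{*}(\h;\,\h)$ later in this paper depends essentially on nonzero higher differentials, for instance $d^{m,\,n}_{n}\bigl((I\otimes\gamma^{*}_{pq})\otimes\theta'\bigr)=I\otimes(\theta\otimes\gamma^{*}_{pq})$ on page $n$, which kills classes present at $E_2$ (and $\h$ is finite dimensional, so the finiteness hypotheses hold and K\"unneth would apply). So the quasi-isomorphism you yourself flag as ``the technical heart'' is not merely difficult to verify; it is false, since its existence would contradict the nondegeneracy of the spectral sequence being constructed.

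The actual construction (Pirashvili's, outlined in the paper immediately after the statement) filters the single complex $C^{*}_{\rm{rel}}(\g)$ rather than decomposing it: $A^{m,\,k}\subseteq \hm(\g^{\otimes(k+m+2)},\,V)$ is the subspace of cochains skew-symmetric in the last $m+1$ tensor factors, and $F^{m,\,k}=A^{m,\,k}/\hm(\Lambda^{k+m+2}(\g),\,V)$ is a decreasing filtration with $F^{0,\,*}=C^{*}_{\rm{rel}}$. The tensor-product shape appears only on the associated graded: $E_0^{m,\,k}\simeq \hm\bigl(\g\otimes\g^{\wedge(m+1)}/\g^{\wedge(m+2)},\,\hm(\g^{\otimes k},\,V)\bigr)$, the $d_0$ differential computes $HL^{k}(\g;\,V)$ in the inner variable, and the passage from $E_1$ to $E_2^{m,\,k}\simeq HR^{m}(\g)\,\widehat{\otimes}\,HL^{k}(\g;\,V)$ requires the additional, nontrivial input that the action of $\g$ on $HL^{*}(\g;\,V)$ is trivial --- a step your proposal has no counterpart for, because you obtain the product structure for free from the (nonexistent) tensor decomposition. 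Precisely because the filtration is not split, higher differentials survive, which is what the rest of the paper exploits. Your remark about the completed tensor product is the one point that is on target: $\hm(A,\,B)\otimes C\neq\hm(A,\,B\otimes C)$ in infinite dimensions, which is why $\widehat{\otimes}$ is needed; but in the correct argument this issue arises in identifying $E_2$ from the filtration, not at a K\"unneth interchange.
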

We outline the key features of the construction and introduce notation that will be used in the sequel.  Let
$A^{m, \, k}$ denote those elements $f \in {\rm{Hom}}( \g^{\otimes (k + m + 2)}, \; V)$ that are skew-symmetric in the last
$m+1$ tensor factors of $\g^{\otimes (k + m + 2)}$.  Filter the complex $C^*_{\rm{rel}}$ by 
$$  F^{m, \, k} =   A^{m, \, k} / {\rm{Hom}}( \Lambda^{k+m+2} ( \g) , \, V) .  $$
Then $F^{m, \, *}$ is a subcomplex of $C^*_{\rm{rel}}$ with $F^{0, \, *} = C^*_{\rm{rel}}$ and  
$F^{m+1, \, *}  \subseteq F^{m, \, *}$.  To identify the $E^{*, \, *}_0$ term, use the isomorphism

\begin{equation}  \label{switch}
	{\rm{Hom}}( \g^{\otimes (k+m+2)}, \, V) \simeq  
	{\rm{Hom}}( \g^{\otimes (m+2)}, \, {\rm{Hom}} ( \g^{ \otimes k}, \, V)) 
\end{equation}
Then 
\begin{equation}
	\begin{split}
		E^{m, \, k}_0 & = F^{m, \, k}/ F^{m+1, \, k-1}  \\
		& \simeq {\rm{Hom}} ( \g \otimes \g^{\wedge (m+1)} / \g^{\wedge (m+2)} , \, 
		{\rm{Hom}}( \g^{\otimes k}, \, V) ), 
	\end{split}
\end{equation}
and $d_0^{m, \, k} : E^{m, \, k}_0 \to E^{m, \, k+1}_0$, $m \geq 0$, $k \geq 0$.  It follows that
$$  E^{m, \, k}_1 \simeq {\rm{Hom}}( \g \otimes \g^{\wedge (m+1)} / \g^{\wedge (m+2)} , \, 
HL^k ( \g; \, V)) .  $$
Now, $d_1^{m, \, k} : E^{m, \, k}_1 \to E^{m+1, \, k}_{1}$.  Since the action of $\g$ on $HL^* ( \g; \, V)$
is trival, we have $E^{m, \, k}_2 \simeq HR^m ( \g ) \widehat{\otimes} HL^k ( \g; \, V)$.  Using
the isomorphism \eqref{switch}, we consider an element of $E^{m, \, k}_2$ operationally in the form
$HL^k ( \g; \, V) \widehat{\otimes} HR^m ( \g )$.

In this section, all computations are done with $V=\g = \h$ and $k=\br$.

\begin{lemma}\label{Serre2} There is an isomorphism in Lie-algebra cohomology 
\begin{equation}
		H^{*}_{\rm{Lie}}(\h;\,\h)\simeq H^{*}_{\rm{Lie}}(\so;\,\br) \otimes [H^{*}_{\rm{Lie}}(\I;\,\h)]^{\so}
	\end{equation}
\end{lemma}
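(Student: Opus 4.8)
The plan is to run the same machine used for Lemma \ref{Serre}, now with the adjoint module $\h$ in place of $\br$ and $\h'$. I would apply the Hochschild--Serre spectral sequence \cite{hochschild1953cohomology} to the ideal $\I \triangleleft \h$, using the isomorphism $\h/\I \simeq \so$; the quotient $\so$ acts on the cohomology of the ideal through the residual action induced from the adjoint action on $\h$. This produces a first-quadrant spectral sequence
\begin{equation*}
	E_2^{p,\,q} \simeq H^p_{\rm{Lie}}\bigl(\so;\, H^q_{\rm{Lie}}(\I;\,\h)\bigr) \Longrightarrow H^{p+q}_{\rm{Lie}}(\h;\,\h).
\end{equation*}
Each fiber term $H^q_{\rm{Lie}}(\I;\,\h)$ is a finite-dimensional $\so$-module, so the convergence and all the $E_2$-level identifications take place in the category of finite-dimensional real vector spaces.

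The first real step is to linearize the base of the spectral sequence. Because $\so=\mathfrak{so}(p,q)$ is semisimple for $p+q\ge 4$, every finite-dimensional $\so$-module $M$ satisfies
\begin{equation*}
	H^p_{\rm{Lie}}(\so;\,M) \simeq H^p_{\rm{Lie}}(\so;\,\br) \otimes M^{\so}.
\end{equation*}
I would prove this by the Casimir argument: complete reducibility splits $M$ into its invariant part $M^{\so}$ and a complement with no trivial summand, and the Casimir operator acts as $0$ on $H^*_{\rm{Lie}}(\so;-)$ — since $\so$ acts trivially on its own cohomology and the Casimir lies in the augmentation ideal of the enveloping algebra — while acting as a nonzero scalar on each nontrivial irreducible summand, forcing that summand's cohomology to vanish. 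Applying this with $M = H^q_{\rm{Lie}}(\I;\,\h)$ rewrites the $E_2$ page as
\begin{equation*}
	E_2^{p,\,q} \simeq H^p_{\rm{Lie}}(\so;\,\br) \otimes \bigl[H^q_{\rm{Lie}}(\I;\,\h)\bigr]^{\so}.
\end{equation*}

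The hard part is the degeneration of the spectral sequence at $E_2$, i.e. the vanishing of every higher differential $d_r$, $r\ge 2$; only then does the $E_2$ page assemble into the claimed tensor product. Here I would exploit that $\I$ is precisely the radical of $\h$ and $\so$ a Levi complement, so that $\h = \I \rtimes \so$ is a genuine semidirect product and the projection $\pi:\h\to\so$ admits a Lie-algebra section $s:\so\to\h$ with $\pi\circ s = \mathrm{id}$. The section shows the base classes $H^p_{\rm{Lie}}(\so;\,\br)$ sitting on the bottom edge $q=0$ are permanent cycles, being pulled back from $\so$ and split off by $s^{*}$; since the differentials are $H^*_{\rm{Lie}}(\so;\,\br)$-linear it then suffices to check that they vanish on the invariant fiber generators in the $p=0$ column, which is the same degeneration already invoked for the trivial and coadjoint coefficients in Lemma \ref{Serre}. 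Finally, working over the field $\br$ there are no extension problems, so $E_2 = E_\infty$ recovers the full cohomology and yields
\begin{equation*}
	H^{*}_{\rm{Lie}}(\h;\,\h)\simeq H^{*}_{\rm{Lie}}(\so;\,\br) \otimes \bigl[H^{*}_{\rm{Lie}}(\I;\,\h)\bigr]^{\so}.
\end{equation*}
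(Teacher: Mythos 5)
Your proposal is correct and takes essentially the same route as the paper: the paper's entire proof of this lemma is to apply the Hochschild--Serre spectral sequence to the ideal $\I$ of $\h$ together with the isomorphism $\h/\I \simeq \so$, exactly as you do. The details you supply --- the Casimir/Whitehead identification $H^{p}_{\rm{Lie}}(\so;\,M)\simeq H^{p}_{\rm{Lie}}(\so;\,\br)\otimes M^{\so}$ for finite-dimensional $M$ and the collapse at $E_2$ when the quotient is semisimple --- are precisely the content of the Hochschild--Serre factorization theorem that the paper invokes by citation rather than re-proving.
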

\begin{proof}
We apply the Hochschild-Serre spectral sequence \cite{hochschild1953cohomology} to the ideal $\I$ of $\h$ and use the isomorphism of Lie algebras $\h/\I \simeq \so$.
\end{proof}


\begin{theorem} There is an isomorphism in Lie-algebra cohomology 
\begin{equation}
	 H^{*}_{\rm{Lie}}(\h;\,\h)\simeq H^{*}_{\rm{Lie}}(\so;\,\br) \otimes \langle I, \rho \rangle.
\end{equation}
\end{theorem}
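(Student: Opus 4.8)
The plan is to combine Lemma \ref{Serre2} with a direct computation of the $\so$-invariant cochain complex. By Lemma \ref{Serre2} it suffices to prove that $[H^{*}_{\rm{Lie}}(\I;\,\h)]^{\so}\simeq\langle I,\rho\rangle$, after which tensoring with $H^{*}_{\rm{Lie}}(\so;\,\br)$ yields the theorem. Now $[H^{*}_{\rm{Lie}}(\I;\,\h)]^{\so}$ is the cohomology of $[\hm(\wedge^{*}\I,\,\h)]^{\so}$, and by Corollary \ref{corollary} together with Lemma \ref{invarianttwo} this complex is one-dimensional in degrees $1,2,n-2,n-1$, with generators $I,\rho,\gamma,\beta$ respectively, and zero in every other degree. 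Since the Chevalley--Eilenberg coboundary $\delta$ is $\so$-equivariant, each $\delta$ carries an invariant generator into the (at most one-dimensional) invariant space one degree higher; hence every coboundary is a scalar multiple of the next generator, and the entire computation reduces to deciding whether the scalars attached to $\delta I$, $\delta\rho$, and $\delta\gamma$ vanish.

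First I would evaluate these coboundaries. Because $\I$ is abelian the coboundary formula collapses to $\delta f(x_{1}\wedge\cdots\wedge x_{k+1})=\sum_{i=1}^{k+1}(-1)^{i}[x_{i},\,f(x_{1}\wedge\cdots\widehat{x_{i}}\cdots\wedge x_{k+1})]$, where $x_{i}\in\I$ acts by the adjoint bracket, governed by $[\pari,\alpha_{kl}]=\delta_{ik}\tfrac{\partial}{\partial x^{l}}-\delta_{il}\tfrac{\partial}{\partial x^{k}}$, $[\pari,\beta_{kl}]=\delta_{ik}\tfrac{\partial}{\partial x^{l}}+\delta_{il}\tfrac{\partial}{\partial x^{k}}$, and $[\pari,\parj]=0$. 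Evaluating $\delta I$ on $\pari\wedge\parj$ produces only brackets of elements of $\I$ with $I(\cdot)\in\I$, all of which vanish, so $\delta I=0$. Evaluating $\delta\rho$ on a decomposable wedge $\tfrac{\partial}{\partial x^{a}}\wedge\tfrac{\partial}{\partial x^{b}}\wedge\tfrac{\partial}{\partial x^{c}}$ of three distinct basis fields yields a sum of brackets $[\tfrac{\partial}{\partial x^{x}},g_{yz}]$ with $x\notin\{y,z\}$, each of which vanishes by the displayed formulas, so $\delta\rho=0$. The only remaining coboundary is $\delta\gamma$, which by equivariance equals $c\,\beta$ for a scalar $c$; to find $c$ I would evaluate $\delta\gamma$ on $\parone\wedge\cdots\widehat{\tfrac{\partial}{\partial x^{m}}}\cdots\wedge\parn$ and collect the resulting multiples of $\tfrac{\partial}{\partial x^{m}}$.

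With the coboundaries in hand the cohomology reads off at once. Degree $0$ is zero, so the cocycle $I$ is not a coboundary and $[H^{1}]=\langle I\rangle$; since $\delta\rho=0$ and the only map into degree $2$ is $\delta I=0$, we get $[H^{2}]=\langle\rho\rangle$; and provided $c\neq0$ the map $\delta\gamma=c\beta$ makes $\gamma$ a non-cocycle and $\beta$ a coboundary, forcing $[H^{n-2}]=[H^{n-1}]=0$. Every remaining degree is already zero, so $[H^{*}_{\rm{Lie}}(\I;\,\h)]^{\so}\simeq\langle I,\rho\rangle$, and Lemma \ref{Serre2} completes the argument.

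The hard part will be the verification that $c\neq0$ in $\delta\gamma=c\beta$: the $n-1$ summands each contribute a multiple of $\tfrac{\partial}{\partial x^{m}}$, and one must track three competing signs --- the positional sign $(-1)^{i}$, the weight $(-1)^{i+j}$ or $(-1)^{i+j+1}$ built into $\gamma$, and the sign of the bracket $[\tfrac{\partial}{\partial x^{k}},\alpha_{km}]=\pm\tfrac{\partial}{\partial x^{m}}$ versus $[\tfrac{\partial}{\partial x^{k}},\beta_{km}]=+\tfrac{\partial}{\partial x^{m}}$ --- and show they conspire so the contributions reinforce rather than cancel. Two low-dimensional coincidences also need a separate look: for $n=4$ degrees $2$ and $n-2$ collide, so degree $2$ is the two-dimensional span $\langle\rho,\gamma\rangle$, and for $n=5$ the map $\delta\rho$ a priori could land in $\langle\gamma\rangle$; but since $\delta\rho=0$ holds uniformly and $\delta\gamma\neq0$, the kernel in degree $2$ is exactly $\langle\rho\rangle$ and the conclusion is unchanged. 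As an independent check one may invoke Poincaré duality $[H^{k}_{\rm{Lie}}(\I;\,\h)]^{\so}\simeq[H^{\rm{Lie}}_{n-k}(\I;\,\h)]^{\so}$ --- the orientation module $\wedge^{n}\I$ being trivial since $\I$ is abelian and $\so$ is traceless --- against the already computed homology $[H^{\rm{Lie}}_{*}(\I;\,\h)]^{\so}$, which is concentrated in degrees $n-2$ and $n-1$ and therefore predicts nonzero cohomology precisely in degrees $2$ and $1$.
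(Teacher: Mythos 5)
Your proposal is correct and follows essentially the same route as the paper: reduce via Lemma \ref{Serre2} to computing $[H^{*}_{{\rm Lie}}(\I;\,\h)]^{\so}$ from the $\so$-invariant cochain complex given by Corollary \ref{corollary}, observe $\delta I=0$ and $\delta\rho=0$, and kill degrees $n-2$ and $n-1$ by a nonzero coboundary linking $\gamma$ and $\beta$. The one item you leave open, the nonvanishing of the scalar $c$ in $\delta\gamma=c\,\beta$, is precisely what the paper asserts without derivation (stated there, with the degrees garbled, as $\delta\beta=(-1)^{n-1}(n-1)\gamma$, whereas your $\delta\gamma=c\,\beta$ with $c=\pm(n-1)$ is the degree-correct formulation).
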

\begin{proof}
	$[H^{*}_{\rm{Lie}}(\I;\,\h)]^{\so}$ is the cohomology of the complex $ [\hm(\wedge^* \I, \h)]^{\so}$. We compute the Lie-algebra cohomology groups $[H^{*}_{\rm{Lie}}(\I;\,\h)]^{so(p,\,q)}$ from the complex:
\begin{equation*}
	\begin{split}
		&[\hm(\boldsymbol{R},\, \h)]^{\so} \xrightarrow{\delta} [\hm(\I,\,\h)]^{\so}\xrightarrow{\delta} [\hm(\I^{\wedge2},\,\h)]^{\so}\xrightarrow{\delta}\\
		& \cdots \xrightarrow{\delta}[\hm(\I^{\wedge (n-2)},\h)]^{\so} \xrightarrow{\delta}[\hm(\I^{\wedge (n-1)},\h)]^{\so}\\ &\xrightarrow{\delta} [\hm(\I^{\wedge n},\,\h)]^{\so}.	
	\end{split}
\end{equation*}
$\delta I = 0$, $\delta \rho = 0$, $\delta \beta=(-1)^{(n-1)}(n-1)\gamma$ and  $\delta\gamma=0.$ It follows that,  
\begin{equation*}
	\begin{split}
		&[H^{0}_{{\rm{Lie}}}(\I;\, \h)]^{\so}\simeq [\hm(\boldsymbol{R},\, \h)]^{\so} \simeq \{0\}.\\
		&[H^{1}_{{\rm{Lie}}}(\I;\, \h)]^{\so}\simeq [\hm(\I,\,\h)]^{\so} \simeq \langle I \rangle.\\ &[H^{2}_{{\rm{Lie}}}(\I;\, \h)]\simeq [\hm(\I^{\wedge2},\,\h)]^{\so} \simeq \langle \rho \rangle.\\ 
		&[H^{n-2}_{{\rm{Lie}}}(\I;\, \h)]^{\so} \simeq \{0\}.\\ &[H^{n-1}_{{\rm{Lie}}}(\I;\, \h)]^{\so} \simeq \langle \gamma \rangle / \langle \gamma \rangle \simeq \{0\}.
	\end{split}
\end{equation*}
We conclude by Lemma \ref{Serre2},
$$ H^{*}_{\rm{Lie}}(\h;\,\h)\simeq H^{*}_{\rm{Lie}}(\so;\,\br) \otimes \langle I, \rho \rangle, \ \rm{where} \ I \ and \ \rho \rm{ \ is \  as \  defined \  above.}$$
\end{proof}
We are now ready to compute  $HL^*(\h; \h)$, we begin in low dimensions.
\begin{lemma} \label{partI}
$HL^0(\h ;\,\h)\simeq 0$ and $HL^1(\h ;\,\h)=  \langle I \rangle,$ where $I$: 
\begin{equation}
	\begin{split}
		I(\alpha_{ij}) &= 0,\ \ 1\le i<j \le p, \ \ p+1\le i<j \le n, \\
		I(\beta_{ij})& =0,\ \ 1\le i\le p,\  \ p+1\le j \le n,\\
		I(\pari)&=\pari, \ \ i=1,2,\cdots,n.
	\end{split}
\end{equation}
\begin{proof}
We compute $HL^{0}(\h;\, \h)$ and $HL^1(\h;\,\h)$  from long exact sequence \eqref{les2}. From \eqref{les2}, we have
\begin{equation*}
	0 \longrightarrow H^0_{\rm{Lie}}(\h ;\,\h)  \xrightarrow{\mathit{\pi^*_{\rm{rel}}}} HL^0(\h ;\,\h) \xrightarrow{\mathit{C_{\rm{rel}}}} 0,
\end{equation*}
which implies $\pi^*_{\rm{rel}}$ is an isomorphism and $$HL^0(\h ;\,\h)\simeq H^0_{\rm{Lie}}(\h ;\,\h) \simeq 0.$$
From \eqref{les2}, we also have
\begin{equation*}
  0 \longrightarrow H^1_{\rm{Lie}}(\h ;\,\h)  \xrightarrow{\mathit{\pi^*_{\rm{rel}}}} HL^1(\h ;\,\h) \xrightarrow{\mathit{C_{\rm{rel}}}} 0, 
\end{equation*}
$\pi^*_{\rm{rel}}$ is an isomorphism and $$HL^1(\h ;\,\h) \simeq H^1_{\rm{\rm{Lie}}}(\h ;\,\h) = \langle I \rangle,$$ where $I$ is just as defined above.
\end{proof}    
\end{lemma}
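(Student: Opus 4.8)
The plan is to extract both groups from the low-degree end of the long exact sequence \eqref{les2}, fed by the computation of $H^{*}_{\rm{Lie}}(\h;\,\h)$ carried out above. The relative complex $C^{*}_{\rm{rel}}(\h)$ is built from the cokernel of $\pi^{*}_{\rm{rel}}$ in tensor/exterior degree $n+2$, so its cohomology $H^{n}_{\rm{rel}}(\h)$ vanishes whenever $n<0$; indeed, since $\h^{\otimes m}=\h^{\wedge m}$ for $m=0,1$, the map $\pi^{*}_{\rm{rel}}$ is already the identity in cochain degrees $0$ and $1$, so the cokernel complex is zero there. Reading \eqref{les2} in the form
$$\cdots \to H^{m}_{\rm{Lie}}(\h;\,\h) \xrightarrow{\pi^{*}_{\rm{rel}}} HL^{m}(\h;\,\h) \to H^{m-2}_{\rm{rel}}(\h) \xrightarrow{c_{\rm{rel}}} H^{m+1}_{\rm{Lie}}(\h;\,\h) \to \cdots,$$
the flanking relative terms are forced to vanish in the bottom two degrees.

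First I would treat $m=0$. The relevant segment is
$$H^{-3}_{\rm{rel}}(\h) \to H^{0}_{\rm{Lie}}(\h;\,\h) \xrightarrow{\pi^{*}_{\rm{rel}}} HL^{0}(\h;\,\h) \to H^{-2}_{\rm{rel}}(\h),$$
and since the two outer terms vanish, $\pi^{*}_{\rm{rel}}$ is an isomorphism, giving $HL^{0}(\h;\,\h)\simeq H^{0}_{\rm{Lie}}(\h;\,\h)$. For $m=1$ the segment
$$H^{-2}_{\rm{rel}}(\h) \to H^{1}_{\rm{Lie}}(\h;\,\h) \xrightarrow{\pi^{*}_{\rm{rel}}} HL^{1}(\h;\,\h) \to H^{-1}_{\rm{rel}}(\h)$$
is likewise flanked by vanishing relative terms, so $\pi^{*}_{\rm{rel}}$ is again an isomorphism and $HL^{1}(\h;\,\h)\simeq H^{1}_{\rm{Lie}}(\h;\,\h)$.

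It then remains to read off the two Lie-algebra groups from $H^{*}_{\rm{Lie}}(\h;\,\h)\simeq H^{*}_{\rm{Lie}}(\so;\,\br)\otimes\langle I,\rho\rangle$, where $I$ lives in cohomological degree $1$ and $\rho$ in degree $2$. In degree $0$ the graded tensor product contributes $H^{0}_{\rm{Lie}}(\so;\,\br)\otimes\langle I,\rho\rangle_{0}=0$, since $\langle I,\rho\rangle$ has no component in degree $0$; hence $H^{0}_{\rm{Lie}}(\h;\,\h)=0$ and $HL^{0}(\h;\,\h)\simeq 0$. In degree $1$ it contributes $H^{0}_{\rm{Lie}}(\so;\,\br)\otimes\langle I\rangle\simeq\br\otimes\langle I\rangle=\langle I\rangle$, using $H^{0}_{\rm{Lie}}(\so;\,\br)=\br$; hence $H^{1}_{\rm{Lie}}(\h;\,\h)=\langle I\rangle$ and $HL^{1}(\h;\,\h)=\langle I\rangle$. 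The explicit representative of the class $I$ is inherited from Corollary \ref{corollary}, namely the $\so$-invariant element of $[\hm(\I,\,\h)]^{\so}$ extended to $\h$ by zero on the generators $\alpha_{ij},\beta_{ij}$; this yields precisely the stated formulas $I(\alpha_{ij})=0$, $I(\beta_{ij})=0$, and $I(\pari)=\pari$.

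The computation is essentially forced once the bookkeeping is set up, so I do not expect a genuine obstacle here; the real content sits in the earlier determination of $H^{*}_{\rm{Lie}}(\h;\,\h)$ and of the invariants, which this lemma simply harvests in the lowest two degrees. The one point demanding care is the degree shift built into \eqref{les2}: one must verify that the relative terms adjacent to $HL^{0}$ and $HL^{1}$ genuinely sit in negative degree and hence vanish, so that the connecting maps $c_{\rm{rel}}$ carry no information and $\pi^{*}_{\rm{rel}}$ becomes an isomorphism in each case.
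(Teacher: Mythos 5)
Your proposal is correct and takes essentially the same route as the paper: both arguments run the long exact sequence \eqref{les2} in the bottom two degrees, observe that the flanking relative cohomology groups vanish so that $\pi^*_{\rm{rel}}$ is an isomorphism, and then read off $H^0_{\rm{Lie}}(\h;\,\h)\simeq 0$ and $H^1_{\rm{Lie}}(\h;\,\h)=\langle I\rangle$ from the preceding theorem $H^{*}_{\rm{Lie}}(\h;\,\h)\simeq H^{*}_{\rm{Lie}}(\so;\,\br)\otimes\langle I,\rho\rangle$. Your explicit justification that the relative terms sit in negative degree and vanish (because $\h^{\otimes m}=\h^{\wedge m}$ for $m=0,1$, so the cokernel complex starts only at tensor degree $2$) is a detail the paper leaves implicit, but it is the same argument.
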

For higher dimensions, the calculations for $HL^*(\h;\,\h)$ proceed in a recursive manner, using results from lower dimensions to compute higher dimensions. Our strategy is to first find $H^*_{{\rm{rel}}}(\h)$ and then insert $H^*_{{\rm{rel}}}(\h)$ into the long exact sequence \eqref{les2} to compute $HL^{*}(\h;\, \h)$.
The Pirashvili spectral sequence converges to $H^*_{{\rm{rel}}}(\h)$  with $E_{2}$ term given by  $$E_2^{m, \, k} \simeq HL^k ( \h ; \, \h) \otimes HR^m (\h),\ \ where \ \ \ m\ge 0, \ \ \, k\ge 0 .$$ 
We demonstrate the strategy in an easy example: From the Pirashvili spectral sequence, $$E_2^{m, \, 0} \simeq HL^0 ( \h ; \, \h) \otimes HR^m (\h) \simeq 0$$ for all $m$ since $HL^0 ( \h ; \, \h) = 0.$ Since $E^{0,\,0}_{2} \simeq 0$, we have 
$H_{\rm{rel}}^{0}(\h)\simeq E^{0,\,0}_{2} \simeq 0.$ Now, we insert $H_{\rm{rel}}^{0}(\h)$ into \eqref{les2} and it yields the following:
\begin{equation*}
	0 \longrightarrow H^2_{\rm{Lie}}(\h ;\,\h)  \xrightarrow{\mathit{\pi^*_{\rm{rel}}}} HL^2(\h ;\,\h) \xrightarrow{\mathit{C_{\rm{rel}}}} H_{rel}^{0}(\h) \simeq 0.
\end{equation*}
Consequently, $HL^2(\h;\,\h) \simeq H^2_{\rm{Lie}}(\h ;\,\h) = \langle \rho \rangle,$ where $\rho$ is as defined above with $\wedge$ replaced with $\otimes$.

\begin{lemma} \label{partII}
For $n=p+q \ge 4$,  $ HL^{n}(\h; \, \h) \simeq [I \otimes\gamma^*_{pq}] $ and $ HL^{n+1}(\h) \simeq [\rho \otimes \gamma^*_{pq}] $.
\end{lemma}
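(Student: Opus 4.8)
The plan is to extract $HL^{n}(\h;\h)$ and $HL^{n+1}(\h;\h)$ from the long exact sequence \eqref{les2}, exactly as in Lemma \ref{partI} and the worked example preceding this lemma. The relevant portion of \eqref{les2} is
\begin{equation*}
	\begin{split}
		H^{n-3}_{\rm{rel}}(\h)\xrightarrow{c_{\rm{rel}}} H^{n}_{\rm{Lie}}(\h;\h)&\xrightarrow{\pi^{*}_{\rm{rel}}} HL^{n}(\h;\h)\xrightarrow{} H^{n-2}_{\rm{rel}}(\h)\xrightarrow{c_{\rm{rel}}}\\
		H^{n+1}_{\rm{Lie}}(\h;\h)&\xrightarrow{\pi^{*}_{\rm{rel}}} HL^{n+1}(\h;\h)\xrightarrow{} H^{n-1}_{\rm{rel}}(\h)\xrightarrow{c_{\rm{rel}}} H^{n+2}_{\rm{Lie}}(\h;\h),
	\end{split}
\end{equation*}
so the first task is to determine $H^{n-3}_{\rm{rel}}(\h)$, $H^{n-2}_{\rm{rel}}(\h)$ and $H^{n-1}_{\rm{rel}}(\h)$. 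These I would read off from the Pirashvili spectral sequence, whose $E_2$-page is $E_2^{m,k}\simeq HL^{k}(\h;\h)\otimes HR^{m}(\h)$. The recursion is well founded because all three groups sit in total degree at most $n-1$, and there the only nonzero columns are $k=1$ and $k=2$: indeed $HL^{0}(\h;\h)=0$, $HL^{k}(\h;\h)=0$ for $3\le k\le n-1$, and any column with $k\ge n$ contributes only in total degree $\ge n$. For the two surviving columns we already know $HL^{1}(\h;\h)=\langle I\rangle$ and $HL^{2}(\h;\h)=\langle\rho\rangle$.

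Writing $A^{j}=H^{j}_{\rm{Lie}}(\so;\br)$ and inserting the formula $HR^{m}(\h)\simeq A^{m+3}\oplus(A^{m+3-n}\otimes\langle\gamma^{*}_{pq}\rangle)$ together with $A^{1}=A^{2}=0$ (Whitehead's lemmas, $\so$ being semisimple for $p+q\ge4$), the $E_2$-contributions are $(I\otimes A^{n-1})\oplus(\rho\otimes A^{n-2})$ in total degree $n-3$; $(I\otimes A^{n})\oplus\langle I\otimes\gamma^{*}_{pq}\rangle\oplus(\rho\otimes A^{n-1})$ in total degree $n-2$; and $(I\otimes A^{n+1})\oplus(\rho\otimes A^{n})\oplus\langle\rho\otimes\gamma^{*}_{pq}\rangle$ in total degree $n-1$. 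The distinguished classes $I\otimes\gamma^{*}_{pq}$ and $\rho\otimes\gamma^{*}_{pq}$ occupy the spots $E_2^{n-3,1}$ and $E_2^{n-3,2}$, where $\gamma^{*}_{pq}$ first appears as the summand $A^{0}\otimes\langle\gamma^{*}_{pq}\rangle$ of $HR^{n-3}(\h)$. The only differentials that can interact with this $\gamma^{*}_{pq}$-line are the $d_2$ landing in $E_2^{n-3,1}$ from $E_2^{n-5,2}=\rho\otimes A^{n-2}$ and the $d_2$ leaving $E_2^{n-3,2}$ into $E_2^{n-1,1}=I\otimes A^{n+2}$; a direct cochain inspection using the explicit formula for $\gamma^{*}_{pq}$ should show the former has no $\gamma^{*}_{pq}$-component and the latter annihilates $\rho\otimes\gamma^{*}_{pq}$. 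Hence both classes are permanent cycles, spanning lines in $H^{n-2}_{\rm{rel}}(\h)$ and $H^{n-1}_{\rm{rel}}(\h)$ respectively.

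I would then run the displayed segment of \eqref{les2}. The key assertion is that the connecting map $c_{\rm{rel}}\colon H^{m}_{\rm{rel}}(\h)\to H^{m+3}_{\rm{Lie}}(\h;\h)$ carries the two $\so$-parts $I\otimes A^{m+3}$ and $\rho\otimes A^{m+2}$ isomorphically onto the summands $A^{m+3}\otimes I$ and $A^{m+2}\otimes\rho$ of $H^{m+3}_{\rm{Lie}}(\h;\h)$, while sending the $\gamma^{*}_{pq}$-part to zero. Granting this at $m=n-3$, the map $c_{\rm{rel}}\colon H^{n-3}_{\rm{rel}}\to H^{n}_{\rm{Lie}}$ is onto, so $\pi^{*}_{\rm{rel}}=0$ on $H^{n}_{\rm{Lie}}$ and the arrow $HL^{n}(\h;\h)\to H^{n-2}_{\rm{rel}}(\h)$ is injective with image $\ker c_{\rm{rel}}=\langle I\otimes\gamma^{*}_{pq}\rangle$; therefore $HL^{n}(\h;\h)\simeq[I\otimes\gamma^{*}_{pq}]$. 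The same bookkeeping one step up, at $m=n-2$, makes $c_{\rm{rel}}\colon H^{n-2}_{\rm{rel}}\to H^{n+1}_{\rm{Lie}}$ onto, forces $\mathrm{Im}\,\pi^{*}_{\rm{rel}}=0$ on $H^{n+1}_{\rm{Lie}}$, and identifies $\ker(c_{\rm{rel}}\colon H^{n-1}_{\rm{rel}}\to H^{n+2}_{\rm{Lie}})=\langle\rho\otimes\gamma^{*}_{pq}\rangle$, whence $HL^{n+1}(\h;\h)\simeq[\rho\otimes\gamma^{*}_{pq}]$.

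The main obstacle is exactly the two facts isolated above: that $I\otimes\gamma^{*}_{pq}$ and $\rho\otimes\gamma^{*}_{pq}$ survive the spectral sequence, and that $c_{\rm{rel}}$ is an isomorphism onto the $\so$-parts while annihilating the $\gamma^{*}_{pq}$-parts. The second is the genuinely delicate point, since $A^{n-1}$, $A^{n}$ and $A^{n+1}$ may be large, so the cancellation of the $\so$-parts cannot be seen by a dimension count alone. I would establish it either by describing $c_{\rm{rel}}$ on explicit cochain representatives assembled from the invariants of section \ref{invariants}, or by a naturality comparison with the connecting map $c_{R}$ of \eqref{les1}, whose action on the same $\so$-parts is already determined in the computation of $HR^{*}(\h)$.
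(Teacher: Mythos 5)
Your strategy coincides with the paper's: locate classes in $H^{*}_{\rm{rel}}(\h)$ via the Pirashvili spectral sequence, then transfer them to $HL^{*}(\h;\,\h)$ through the long exact sequence \eqref{les2}. Your $E_2$ bookkeeping in total degrees $n-3$, $n-2$, $n-1$ agrees with the paper's, and you are in fact more explicit than the paper on two points: why only the columns $k=1,2$ can contribute in this range, and the exactness argument showing that $[I\otimes\gamma^{*}_{pq}]$ and $[\rho\otimes\gamma^{*}_{pq}]$ exhaust $HL^{n}$ and $HL^{n+1}$ rather than merely lying in them. (One caveat: the vanishing $HL^{k}(\h;\,\h)=0$ for $3\le k\le n-1$ that you invoke is not free; it has to be carried as an induction hypothesis in the recursion, as the paper implicitly does.)

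The gap is that the step you defer to your final paragraph --- that $c_{\rm{rel}}$ is onto the $\so$-parts and zero on the $\gamma^{*}_{pq}$-parts --- is not an implementation detail: it \emph{is} the proof, and of your two proposed routes only the first works as stated. The paper establishes it exactly that way, by computing $\delta$ on explicit tensor-product representatives. For the $I$-line, evaluated on $g_{1}\otimes\cdots\otimes g_{m+4}$, one has
\begin{equation*}
\delta(I\otimes\theta') \;=\; \delta I\otimes\theta' \;-\; I\otimes\delta\theta' \;+\; \sum_{i=3}^{m+4}(-1)^{i}\,(g_{i}\cdot I)(g_{1})\,\theta'(g_{2}\otimes\cdots\widehat{g_{i}}\cdots\otimes g_{m+4}),
\end{equation*}
and since $\delta I=0$ and $I$ is $\h$-invariant (full $\h$-invariance, not merely $\so$-invariance, is what kills the cross terms), this collapses to $-I\otimes c_{R}(\theta')=-I\otimes\theta$; likewise $\delta(I\otimes\gamma^{*}_{pq})=0$ because $\delta\gamma^{*}_{pq}=0$. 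The genuinely delicate case is the $\rho$-line, on which your proposal is silent: $\rho$ is only $\so$-invariant, so the cross terms $(g_{i}\cdot\rho)(g_{1}\otimes g_{2})\,\theta'(\cdots)$ do not vanish identically. The paper disposes of them by choosing the representative $\theta'\in HR^{m}(\h)$ to vanish whenever one of its arguments lies in $\I$, and then using skew-symmetry of the $E_2$-representative in its last variables to swap any $\I$-entry acting on $\rho$ into a slot where either $\so$-invariance of $\rho$ or the vanishing property of $\theta'$ applies. Your alternative route --- ``naturality'' between \eqref{les1} and \eqref{les2} --- does not exist as a formal statement: the cross terms above measure precisely the failure of $\delta$ to act as $\delta\otimes 1\pm 1\otimes\delta$ on product representatives, so any identification of $c_{\rm{rel}}$ with $c_{R}$ must be earned by this same invariance/skew-symmetry computation. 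Finally, a small indexing slip: writing $A^{j}=H^{j}_{\rm{Lie}}(\so;\,\br)$ as you do, the summands of $H^{m+3}_{\rm{Lie}}(\h;\,\h)$ are $A^{m+2}\otimes I$ and $A^{m+1}\otimes\rho$, not $A^{m+3}\otimes I$ and $A^{m+2}\otimes\rho$; your specific instances at $m=n-3$ and $m=n-2$ use the correct groups, so the slip does not propagate.
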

\begin{proof}
We begin the iteration of elements in the $E_{2}^{*, *} $ term of the Pirashvili spectral sequence with $HL^1(\h; \, \h)$. Consider the following elements : 

$$ I \otimes \theta'   \,  \in  \, HL^1 ( \h ; \, \h) \otimes HR^m (\h) \subseteq E_2^{m, \, 1}$$ 
$$ I \otimes \gamma^*_{pq}    \,  \in HL^1 ( \h ; \, \h) \otimes HR^{n-3} (\h) \subseteq E_2^{m, \, n-3}$$ 
where $c_R(\theta')=\theta \in  H^{m+3}_{\rm{Lie}}(\so, \br)$.

In the Pirashvili spectral sequence, $d_r^{m,\,1}(I \otimes \theta')=0$ for all $r\geq 2$, so $I\otimes\theta'$ will not be a coboundary and therefore  $[I \otimes \theta'] \in H^{m+1}_{\rm{rel}}(\h)$. Using the long exact sequence \eqref{les2}, we show that $[I \otimes \theta]$ is mapped to 0 in $HL^{m+4}$. Now, 
\begin{equation*}
		\begin{split}
			\delta(I \otimes \theta')(g_{1} \otimes \cdots \otimes g_{m+4})&= \delta I \otimes \theta' \ (g_{1} \otimes \cdots \otimes g_{m+4})- I \otimes \delta \theta' \ (g_{1} \otimes \cdots \otimes g_{m+4})\\
			& + \sum\limits_{i=3}^{m+4} (-1)^i g_{i} I(g_1) \theta'(g_{2} \otimes \cdots \widehat{g_i} \cdots \otimes g_{m+4}),
		\end{split}
\end{equation*}
 $g_{i}I=0$ for all $g_{i} \in \h$ since $I$ is $\h$-invariant and  $\delta I = 0$, then $$\delta(I \otimes \theta')= - I \otimes \delta \theta' = -I \otimes c_{R} (\theta') = -I \otimes \theta.$$ Consequently, $c_{\rm{rel}}([I \otimes \theta']) = [\delta (I \otimes \theta')]= [I \otimes \theta]= [I \wedge \theta]$ in  $H_{Lie}^{m+4}(\h;\,\h)$ and $\pi^*_{{\rm{rel}}}([I \otimes \theta])=0$ in $HL^{m+4}(\h;\,\h)$.\\
 
 For  $I \otimes \gamma^*_{pq}$, it is quite easy to see that $d_r^{m,\,1}(I \otimes \gamma^*_{pq})=0$, for all $r\geq 2$ and $I \otimes \gamma^*_{pq}$ is not a coboundary in the Pirashvili spectral sequence,  therefore $[I\otimes \gamma^*_{pq}] \in H^{n-2}_{\rm{rel}}(\h)$. Now, we shift our attention to \eqref{les2} where
\begin{equation*}
		\begin{split}
			\delta(I\otimes \gamma^*_{pq})(g_1 \otimes \cdots \otimes g_{n+1}) &= \delta I \otimes \gamma^*_{pq}(g_1 \otimes \cdots \otimes g_{n+1}) - I \otimes \delta\gamma^*_{pq} (g_1 \otimes \cdots \otimes g_{n+1})\\
			& + \sum\limits_{i=3}^{n+1} (-1)^{i} g_{i}I(g_1)\gamma^*_{pq}(g_2 \otimes \cdots \widehat{g_{i}} \cdots  \otimes g_{n+1}).
		\end{split}
\end{equation*}
We know that $\delta I = 0 $, $\delta \gamma^*_{pq}=0$ and  $g_{i}I=0$ for all $g_{i} \in \h $, then $\delta(I\otimes \gamma^*_{pq}) = 0.$ 
Consequently, $c_{\rm{rel}}([I \otimes\gamma^*_{pq}]) =[\delta(I \otimes \gamma^*_{pq})]  = 0$ in  $H^{n+1}_{\rm{Lie}}(\h ;\, \h)$ which implies $$[I \otimes\gamma^*_{pq}] \in HL^{n}(\h; \, \h).$$

We continue the iteration of elements in the $E_{2}^{*, *} $ term of the Pirashvili spectral sequence with $HL^2(\h; \, \h)$. Consider these elements: 

$$ \rho \otimes \theta' \, \in HL^2 ( \h ; \, \h) \otimes HR^m (\h) \subseteq E_2^{m, \, 2}$$
$$ \rho \otimes \gamma^*_{pq}  \, \in HL^2 ( \h ; \, \h) \otimes HR^{n-3} (\h) \subseteq E_2^{m, \, n-3},$$
where $c_R(\theta')=\theta \in  H^{m+3}_{\rm{Lie}}(\so, \br)$.
 
In the Pirashvili spectral sequence, $d_{r}^{m,\,2}(\rho \otimes \theta')=0$ for all $r\ge2$, so $\rho \otimes \theta'$ is not a coboundary and therefore $[\rho \otimes \theta' ]\in H^{m+2}_{{\rm{rel}}}(\h).$ Using the long exact sequence \eqref{les2}, we show that $[\rho \otimes \theta]$ is mapped to 0 in $HL^{m+5}$. Now, 
\begin{equation*}
	\begin{split}
		\delta(\rho \otimes \theta')(g_1 \otimes \cdots \otimes g_{m+5}) &= \delta \rho \otimes \theta '(g_1 \otimes \cdots \otimes g_{m+5}) + \rho \otimes \delta \theta '(g_1 \otimes \cdots \otimes g_{m+5})\\
		& + \sum\limits_{i=4}^{m+5} (-1)^{i}g_{i}\rho(g_{1}\otimes g_{2})\theta'(g_3 \otimes \cdots  \widehat{g_{i}} \cdots \otimes g_{m+5}),
	\end{split}
\end{equation*}
$\delta \rho = 0 $ and the element $\theta'\in HR^{m}(\h)$ can be chosen so that $\theta '(y_{1}\otimes y_{2} \otimes...\otimes y_{m+2}) = 0$ if any $y_{i} \in \I$. Since $\rho \otimes \theta' \in E_{2}^{m,\,2}$, $d^2_{m,\,2}(\rho \otimes \theta')$  is skew symmetric in the variables $g_{3},g_{4},\cdots,g_{m+5}$, and suppose that $g_{i} \in \I$ for one and only one i $\in$ $\{4,5,\cdots,m+5\}$, then $$g_{i}\rho(g_{1}\otimes g_{2})\theta'(g_3 \otimes \cdots  \widehat{g_{i}} ... \otimes g_{m+5})= \pm g_{3}\rho(g_{1}\otimes g_{2})\theta'(g_i \otimes ...  \widehat{g_{3}} \cdots \otimes g_{m+5})=0, $$since $g_3 \in \so$-invariant and $\rho$ is an $\so$-invariant.  It follow that $$\delta(\rho \otimes \theta')=  \rho \otimes \delta \theta'= \rho \otimes c_{R}(\theta')=\rho \otimes \theta.$$
Consequently, $c_{\rm{rel}}([\rho \otimes \theta'])=[\delta(\rho \otimes \theta')]=[\rho \otimes \theta]=[\rho \wedge \theta]$ in $H_{Lie}^{m+5}(\h;\,\h)$ and $\pi^{*}_{{\rm{rel}}}([\rho \otimes \theta])=0$ in $HL^{m+5}(\h;\,\h).$

For  $\gamma^*_{pq} \in HR^{n-3}(\h)$, we have $\rho \otimes \gamma^*_{pq} \in E_{2}^{n-3,\,2}$ and $E_{2}^{n-5,\,3}\xrightarrow{d_{2}^{n-5,\,3}}E_{2}^{n-3,\,2}\xrightarrow{d_{2}^{n-3,\,2}} E_{2}^{n-1,\,1}.$
$d_{2}^{n-3,\,2}(\rho \otimes \gamma^*_{pq})=0$, $d_{2}^{n-5,\,3}$ is also a zero map, so $E_{3}^{n-3,\,2} \simeq E_{2}^{n-3,\,2}.$ 
Now, on the third page of the Pirashvili spectral sequence we have $$ E_{3}^{n-6,\,4}\xrightarrow{d_{3}^{n-3,\,2}} E_{3}^{n-3,\,2}\xrightarrow{d_{3}^{n-3,\,2}} E_{3}^{n,\,0}\simeq \{0\}. $$
Thus, $d_r^{n-3,\,2}(\rho \otimes \gamma^*_{pq})=0 \ {\rm{for \ all}} \ r \geq 3$ and $\rho \otimes \gamma^*_{pq}$ is not a coboundary the Pirashvili spectral sequence, which implies $[\rho \otimes \gamma^*_{pq}] \in H^{n-1}_{\rm{rel}}(\h)$. Again, we shift our attention to \eqref{les2}, $$c_{\rm{rel}}([\rho \otimes \gamma^*_{pq}]) = [\delta(\rho \otimes \gamma^*_{pq})] = 0$$ in $H^{n+2}_{\rm{Lie}}(\h;\,\h)$ which then implies $$[\rho \otimes \gamma^*_{pq}]  \in HL^{n+1}(\h).$$
We have established that $[I \otimes\gamma^*_{pq}] \in HL^{n}(\h; \, \h)$  and $[\rho \otimes \gamma^*_{pq}]  \in HL^{n+1}(\h; \, \h)$.\\
\end{proof}
\begin{theorem}
	For  $p+q\ge4$, $HL^{*}(\h;\,\h) \simeq \langle I, \rho \rangle \otimes T(\gamma^{*}_{pq})$, where\\ $T(\gamma^{*}_{pq}):=$$\sum\limits_{k\ge0}\langle\gamma^{*}_{pq}\rangle^{\otimes k}$ is the tensor algebra on the class of 
\begin{equation}
		\begin{split}
			\gamma^{*}_{pq}=&\sum\limits_{ 1\le i<j \le p} (-1)^{i+j} \dparone \wedge  \cdots \wedge \widehat{\dpari} \wedge \cdots \wedge \widehat{\dparj} \wedge \cdots  \wedge \dparn \otimes \alpha^{*}_{ij}\\
			-&\sum\limits_{ p+1\le i<j \le n} (-1)^{i+j+1} \dparone \wedge  \cdots \wedge\widehat{\dpari}
			 \wedge \cdots \wedge\widehat{\dparj} \wedge \cdots \wedge \dparn \otimes \alpha^{*}_{ij}\\
			-&\underset{p+1\le j \le n}{\sum\limits_{1\le i \le p}} (-1)^{i+j+1} \dparone \wedge  \cdots \wedge \widehat{\dpari} \wedge \cdots \wedge \widehat{\dparj} \wedge \cdots \wedge \dparn \otimes \beta^{*}_{ij}
		\end{split}
\end{equation}
\end{theorem}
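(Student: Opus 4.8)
The plan is to prove the isomorphism by induction on the tensor power $k$ of $\gamma^*_{pq}$, intertwining the Pirashvili spectral sequence (which computes $H^*_{\rm{rel}}(\h)$ out of $HL^*(\h;\h)$ and $HR^*(\h)$) with the long exact sequence \eqref{les2} (which then recovers the next batch of $HL^*(\h;\h)$). The base of the induction is already available: Lemma \ref{partI} and the worked example give $HL^0=0$, $HL^1=\langle I\rangle$ and $HL^2=\langle\rho\rangle$, while Lemma \ref{partII} supplies $HL^n(\h;\h)=\langle I\otimes\gamma^*_{pq}\rangle$ and $HL^{n+1}(\h;\h)=\langle\rho\otimes\gamma^*_{pq}\rangle$. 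These already exhibit the degree shift of $n-1$ produced by tensoring with $\gamma^*_{pq}\in HR^{n-3}(\h)$: a class in $E_2^{m,k}$ contributes to $H^{m+k}_{\rm{rel}}(\h)$, and \eqref{les2} feeds $H^m_{\rm{rel}}(\h)$ into $HL^{m+2}(\h;\h)$, so a class in $HL^k(\h;\h)$ tensored with $\gamma^*_{pq}$ reappears in $HL^{k+n-1}(\h;\h)$.

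First I would formulate the inductive hypothesis: in all degrees $\le 2+k(n-1)$, the space $HL^*(\h;\h)$ is spanned by $\{I\otimes(\gamma^*_{pq})^{\otimes j},\ \rho\otimes(\gamma^*_{pq})^{\otimes j}:0\le j\le k\}$, placed in degrees $1+j(n-1)$ and $2+j(n-1)$ respectively, and is zero in the intervening degrees (which are genuine since $n-1\ge 3$). Substituting this together with the known $HR^m(\h)\simeq H^{m+3}_{\rm{Lie}}(\so;\br)\oplus(H^{m+3-n}_{\rm{Lie}}(\so;\br)\otimes\langle\gamma^*_{pq}\rangle)$ into $E_2^{m,k}\simeq HL^k(\h;\h)\otimes HR^m(\h)$, I would track the two relevant families of survivors at the current level: $I\otimes(\gamma^*_{pq})^{\otimes k}\otimes\theta'$ and $I\otimes(\gamma^*_{pq})^{\otimes(k+1)}$, together with their $\rho$-analogues, where $c_R(\theta')=\theta\in H^{m+3}_{\rm{Lie}}(\so;\br)$. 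Exactly as in Lemma \ref{partII}, the $\so$-invariance of $\gamma^*_{pq}$ and $\rho$, the $\h$-invariance of $I$, and the vanishing $\delta I=\delta\rho=\delta\gamma^*_{pq}=0$ should force every higher differential out of these classes to vanish; moreover the differentials that could hit them emanate from columns of the form $HL^{k'}\otimes HR^{m'}$ that are zero in the relevant degrees by the inductive hypothesis, so the classes are not coboundaries and survive to $H^*_{\rm{rel}}(\h)$.

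Next I would feed these survivors into \eqref{les2}. Generalizing the first computation of Lemma \ref{partII}, I expect the $\theta'$-flavored survivors to be carried by $c_{\rm{rel}}$ onto classes of $H^*_{\rm{Lie}}(\h;\h)$ lying in the $H^*_{\rm{Lie}}(\so;\br)$ direction of the splitting $H^*_{\rm{Lie}}(\h;\h)\simeq H^*_{\rm{Lie}}(\so;\br)\otimes\langle I,\rho\rangle$; by exactness these Lie classes then lie in $\ker\pi^*_{\rm{rel}}$ and die in $HL^*$, which is precisely the mechanism that strips off the surplus $\so$-cohomology. By contrast, the purely $\gamma^*_{pq}$-flavored survivors satisfy $c_{\rm{rel}}(I\otimes(\gamma^*_{pq})^{\otimes(k+1)})=[\delta(I\otimes(\gamma^*_{pq})^{\otimes(k+1)})]=0$, because $\delta I=0$, $\delta\gamma^*_{pq}=0$, and $g_iI=0$ for every $g_i\in\h$ annihilates the cross terms of the Leibniz coboundary; the same holds with $\rho$ in place of $I$, now invoking $\delta\rho=0$ and the $\so$-invariance cancellation used in Lemma \ref{partII}. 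Exactness of \eqref{les2} then lifts $I\otimes(\gamma^*_{pq})^{\otimes(k+1)}$ into $HL^{1+(k+1)(n-1)}(\h;\h)$ and $\rho\otimes(\gamma^*_{pq})^{\otimes(k+1)}$ into $HL^{2+(k+1)(n-1)}(\h;\h)$, with no further new classes, which completes the inductive step; assembling over all $k$ gives the graded isomorphism $HL^*(\h;\h)\simeq\langle I,\rho\rangle\otimes T(\gamma^*_{pq})$.

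The step I expect to be the main obstacle is proving, uniformly in $k$, the vanishing of the higher differentials and of $c_{\rm{rel}}$ on the iterated classes $I\otimes(\gamma^*_{pq})^{\otimes k}$ and $\rho\otimes(\gamma^*_{pq})^{\otimes k}$. Lemma \ref{partII} verifies this by hand only for $k=1$, where one uses that at most one tensor slot of the relevant cochain can lie in $\I$ and that $\so$-invariance then cancels the offending term. For general $k$ one must control the combinatorics of the Leibniz coboundary acting on a $(k+1)$-fold tensor of the top-degree invariant $\gamma^*_{pq}$, checking that each cross term either vanishes by $\h$-invariance of $I$ (or, for the $\rho$-classes, by the $\so$-invariance cancellation) or cancels in pairs by skew-symmetry in the last tensor factors. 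Making this bookkeeping uniform in $k$, rather than re-deriving it degree by degree, is the technical heart of the argument.
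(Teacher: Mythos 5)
Your overall architecture matches the paper's: induction on the tensor power $k$, with the Pirashvili spectral sequence computing $H^{*}_{\rm{rel}}(\h)$ and the long exact sequence \eqref{les2} converting that into $HL^{*}(\h;\,\h)$, with base cases from Lemma \ref{partI}, the $HL^2$ computation, and Lemma \ref{partII}. Your treatment of the pure classes $X\otimes(\gamma^{*}_{pq})^{\otimes(k+1)}$, $X\in\{I,\rho\}$, is also the paper's: all differentials vanish on them, $c_{\rm{rel}}$ kills them because $\delta I=\delta\rho=\delta\gamma^{*}_{pq}=0$ together with the invariance properties, and they lift to the new classes of $HL^{*}(\h;\,\h)$. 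The gap is in your treatment of the mixed classes. You assert that every higher differential out of $(X\otimes(\gamma^{*}_{pq})^{\otimes k})\otimes\theta'$ vanishes for all $k$, so that these survive to $H^{*}_{\rm{rel}}(\h)$ and are then disposed of by $c_{\rm{rel}}$ ``exactly as in Lemma \ref{partII}''. But that mechanism is a $k=0$ phenomenon only. For $k\ge 1$ the paper shows the mechanism changes: on page $n$ of the Pirashvili spectral sequence the differential is \emph{nonzero},
$$ d^{m,\,n}_{n}\big((I\otimes\gamma^{*}_{pq})\otimes\theta'\big)=I\otimes(\theta\otimes\gamma^{*}_{pq})\ \in\ HL^{1}(\h;\,\h)\otimes HR^{m+n}(\h)\subseteq E^{m+n,\,1}_{n}, $$
and likewise $d^{m,\,n+1}_{n}((\rho\otimes\gamma^{*}_{pq})\otimes\theta')=\rho\otimes(\theta\otimes\gamma^{*}_{pq})$ and $d^{n}((I\otimes(\gamma^{*}_{pq})^{\otimes 2})\otimes\theta')=(I\otimes\gamma^{*}_{pq})\otimes(\gamma^{*}_{pq}\otimes\theta)$, etc. So for $k\ge 1$ the $\theta'$-flavored classes never reach $H^{*}_{\rm{rel}}(\h)$ at all.

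This is not a harmless alternative route, because your accounting omits the \emph{targets} of those differentials: the $E_2$ term also contains the classes $X\otimes(\theta\otimes\gamma^{*}_{pq})$, i.e.\ $I\in HL^{1}$ or $\rho\in HL^{2}$ tensored with the $\gamma^{*}_{pq}$-component $H^{m+3}_{\rm{Lie}}(\so;\,\br)\otimes\langle\gamma^{*}_{pq}\rangle$ of $HR^{m+n}(\h)$, and these appear in neither of your ``two relevant families''. Under your hypothesis that all differentials vanish, these classes would survive to $H^{m+n+1}_{\rm{rel}}(\h)$ (resp.\ $H^{m+n+2}_{\rm{rel}}(\h)$). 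Moreover $c_{\rm{rel}}$ vanishes on them by the very arguments you invoke elsewhere: $I$ is $\h$-invariant with $\delta I=0$, and $\theta\otimes\gamma^{*}_{pq}$ lies in the image of $H^{*}_{\rm{Lie}}(\h;\,\h^{'})\to HR^{*}(\h)$, hence is annihilated by the connecting map. By exactness of \eqref{les2} they would then force nonzero classes in $HL^{m+n+3}(\h;\,\h)$ for every $m$ with $H^{m+3}_{\rm{Lie}}(\so;\,\br)\neq 0$ --- spurious classes in degrees where the theorem asserts $HL$ is zero (the claimed answer is at most one-dimensional in each degree). So your inductive step, as stated, overproduces cohomology and is inconsistent with the statement being proved. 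The missing idea is precisely the cancellation the paper engineers: the single nonvanishing differential $d_n$ kills the level-$k$ $\theta'$-classes (as sources) and the $X\otimes(\theta\otimes\gamma^{*}_{pq})$-classes (as targets) in pairs, so that only $\langle I,\rho\rangle\otimes(\gamma^{*}_{pq})^{\otimes k}$ survives. Note also that the step you flag as the ``technical heart'' --- uniform bookkeeping of the coboundary cross terms --- concerns only the pure classes, which is the part of your argument that does work; the actual crux is identifying this nonzero differential.
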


\begin{proof} The result for $k=0$  and $k=1$  follow from Lemma \ref{partI} and Lemma \ref{partII} respectiveely.  We continue with iteration of elements in the $E_2^{*,*}$  term of the Pirashvili spectral sequence.\\

Consider $[I \otimes\gamma^*_{pq}] \in HL^{n}(\h; \, \h)$ and $\theta' \in HR^{m}(\h)$,
$$[I \otimes\gamma^*_{pq}] \otimes \theta' \in HL^{n}(\h; \, \h)\otimes HR^m(\h) \subseteq E^{m,\,n}_2 .$$
In the Pirashvili spectral sequence, $d^{m,n}_{r}((I \otimes\gamma^*_{pq})\otimes\theta')= 0 $ for $2\le r \le n-1$, but on page $n$ we have $$d^{m,\, n}_n( (I \otimes \gamma^*_{pq}) \otimes  \theta') = I \otimes (\theta  \otimes  \gamma^*_{pq}) \in HL^{1}(\h; \, \h)  \otimes HR^{m+n}(\h) \subseteq E^{m+n,\,1}_{n}.$$
We conclude $[I \otimes (\theta \ \otimes  \gamma^*_{pq})]$ $\notin$ $H^*_{\rm{rel}}(\h)$, when $\theta \in H^{m+3}_{Lie}(\so;\, \br ) \, {\rm{and}} \, m=0,1,2, \cdots$.\\
Note: The case of $\theta \otimes  \gamma^*_{pq} \in HR^{m+n}(\h)$, {\rm{where}} $\theta \in H^{m+3}_{Lie}(\so;\, \br ) \, {\rm{and}} \, m=0,1,2,\cdots $, with  $HL^{1}(\h; \, \h)$ \big( $I \otimes (\theta \otimes \gamma^*_{pq}) \in HL^{1}(\h; \, \h)\otimes HR^{m+n}(\h) \subseteq E^{m+n,\,1}_{2} $ \big) is now covered in the iteration.\\

With  $[I \otimes\gamma^*_{pq}] \in HL^{n}(\h; \, \h)$ and $\gamma^*_{pq} \in HR^{n-3}(\h)$, we have $[I \otimes\gamma^*_{pq}]\otimes\gamma^*_{pq} \in HL^{n}(\h; \, \h)  \otimes HR^{n-3}(\h) \subseteq E^{n-3,\,n}_{2}.$ In the Pirashvili spectral sequence, $d^{n-3,\, n}_{r}( [I \otimes\gamma^*_{pq}]\otimes\gamma^*_{pq}) =0,$ for all $r\ge 2$, which implies  $[I \otimes\gamma^*_{pq}]\otimes\gamma^*_{pq} \in$ $H^{2n-3}_{{\rm{rel}}}(\h).$
Now,
\begin{equation*}
\begin{split}
		\delta&((I \otimes\gamma^*_{pq})\otimes\gamma^*_{pq})(g_1\otimes \cdots \otimes g_n \otimes \cdots \otimes g_{2n})\\
		&=\delta(I \otimes\gamma^*_{pq})\otimes \gamma^*_{pq} (g_1\otimes \cdots \otimes g_n \otimes \cdots \otimes g_{2n})\\
		&+(I \otimes\gamma^*_{pq})\otimes \delta\gamma^*_{pq}(g_1\otimes \cdots \otimes g_n \otimes \cdots \otimes g_{2n})\\
		&+\sum\limits_{i=n+2}^{2n}(-1)^{i}(g_{i}(I \otimes\gamma^*_{pq}))(g_{1}\otimes \cdots \otimes g_{n})\gamma^*_{pq}(g_{n+1} \otimes \cdots \otimes \hat{g_{i}}\otimes \cdots \otimes g_{2n}),
	\end{split}
\end{equation*}
since $\delta(I \otimes\gamma^*_{pq}) = 0 $, $\delta \gamma^*_{pq} = 0$ , $I$ and $\gamma^*_{pq}$ are $\h$-invariant, then $\delta((I \otimes\gamma^*_{pq})\otimes\gamma^*_{pq})= 0$. Consequently, $c_{\rm{rel}}(\big[[I \otimes\gamma^*_{pq}]\otimes\gamma^*_{pq}\big]) = [\delta(I \otimes\gamma^*_{pq})\otimes\gamma^*_{pq})]=0$ in $H^{2n}_{{\rm{Lie}}}(\h;\,\h)$, so $[I \otimes\gamma^*_{pq}]\otimes\gamma^*_{pq}$ $\in$ $HL^{2n-1}(\h;\,\h).$

Using the same argument above for $[\rho \otimes \gamma^*_{pq}] \otimes \theta ' \in HL^{n+1}(\h; \, \h)  \otimes HR^{m}(\h) \subseteq E^{m,\, n+1}_{2}$ and  $\rho \otimes (\theta \ \otimes  \gamma^*_{pq}) \in HL^{2}(\h; \, \h)  \otimes HR^{m+n}(\h) \subseteq E^{m+n,\,2}_{2}$ : On page $n$ of the Pirashvilli spectral sequence,
$$d^{m,\, n+1}_{n}( (\rho \otimes \gamma^*_{pq}) \otimes  \theta') = \rho \otimes (\theta  \otimes  \gamma^*_{pq}). $$ Therefore, $[\rho \otimes (\theta \ \otimes  \gamma^*_{pq})]$ is not in $H^*_{\rm{rel}}(\h)$,  when $\theta \in H^{m+3}_{\rm{Lie}}(\so;\,\br)$ and $m=0,1,2, \cdots $.

Similarly, the case of \ $\theta \otimes  \gamma^*_{pq} \in HR^{m+n}(\h)$, {\rm{where}} $\theta \in H^{m+3}_{Lie}(\so;\, \br ) \, {\rm{and}} \, m=0,1,2,\cdots $, with  $HL^{2}(\h; \, \h)$ \big( $\rho \otimes (\theta \otimes \gamma^*_{pq}) \in HL^{2}(\h; \, \h)\otimes HR^{m+n}(\h) \subseteq E^{m+n,\,2}_{2} $ \big) is now covered in the iteration.\\

Consider $[\rho \otimes \gamma^{*}_{pq}] \in HL^{n+1}(\h;\,\h)$ and $\gamma^{*}_{pq}$ $\in$ $HR^{n-3}(\h)$, we have $(\rho \otimes \gamma^{*}_{pq}) \otimes \gamma^{*}_{pq}$ $\in$ $E_{2}^{n-3,\,n+1} $. Now, 
\begin{equation*}
	\begin{split}
			\delta((\rho \otimes \gamma^{*}_{pq}) \otimes \gamma^{*}_{pq})(g_{1} \otimes \cdots \otimes g_{2n+1})&\\
			= \delta(\rho \otimes \gamma^{*}_{pq}) \otimes \gamma^{*}_{pq} (g_{1} \otimes \cdots \otimes g_{2n+1})&\\
			+ (\rho \otimes \gamma^{*}_{pq}) \otimes \delta\gamma^{*}_{pq} (g_{1} \otimes \cdots \otimes g_{2n+1})& \\ 
			+\sum\limits_{i=n+3}^{2n+1}(-1)^{i}g_{i}\rho(g_1 \otimes g_2)\gamma^{*}_{pq}(g_3\otimes \cdots\otimes g_{n+1})&\gamma^{*}_{pq}(g_{n+2} \otimes \cdots \hat{g_{i}}\cdots \otimes g_{2n+1}).
	\end{split}
\end{equation*}
$\delta((\rho \otimes \gamma^{*}_{pq}) \otimes \gamma^{*}_{pq})(g_{1} \otimes \cdots \otimes g_{2n+1}) = \sum\limits_{i=n+3}^{2n+1}(-1)^{i}g_{i}\rho(g_1 \otimes g_2)\gamma^{*}_{pq}(g_3\otimes \cdots\otimes g_{n+1})\gamma^{*}_{pq}(g_{n+2} \otimes \cdots \hat{g_{i}} \cdots \otimes g_{2n+1})$, since $\delta(\rho \otimes \gamma^{*}_{pq})= 0$ and $\delta\gamma^{*}_{pq}=0.$
Using a skew symmetric argument, $\delta((\rho \otimes \gamma^{*}_{pq}) \otimes \gamma^{*}_{pq})=0$, which implies $(\rho \otimes \gamma^{*}_{pq})\otimes \gamma^{*}_{pq} \in H^{2n-2}_{\rm{rel}}(\h) $. Consequently, $\pi^{*}_{\rm{rel}}\big(\big[[\rho \otimes \gamma^{*}_{pq}] \otimes \gamma^{*}_{pq}\big]\big)= 0 $. Therefore  $(\rho \otimes \gamma^{*}_{pq})\otimes \gamma^{*}_{pq}$ $\in$ $HL^{2n}(\h;\,\h)$.

Also, in the Pirashvili spectral sequence, we have
$$d^n((I \otimes (\gamma^{*}_{pq})^{\otimes2})\otimes \theta')= (I \otimes \gamma^{*}_{pq} ) \otimes ( \gamma^{*}_{pq} \otimes \theta)$$
$$d^n((\rho \otimes (\gamma^{*}_{pq})^{\otimes2})\otimes \theta')= (\rho \otimes \gamma^{*}_{pq}) \otimes ( \gamma^{*}_{pq} \otimes \theta) $$

By induction on k, $HL^{*}(\h; \, \h)$ is the direct sum of vector spaces $\langle I, \rho \rangle \otimes (\gamma^{*}_{pq})^{\otimes k}.$ We conclude that $$HL^{*}(\h;\,\h) \simeq \langle I, \rho \rangle \otimes T(\gamma^{*}_{pq})$$
\end{proof}

\newpage

\end{document}